\documentclass[11pt]{elsarticle} 
\usepackage{amsmath}
\usepackage{amssymb}
\usepackage{amsthm,mathtools}
\usepackage{lineno}
\usepackage[mathscr]{eucal}
\usepackage{xcolor}
\usepackage{fontenc}
\usepackage{graphicx}
\usepackage{geometry}
\geometry{a4paper,left=25mm,right=25mm,top=20mm,}
\usepackage{lineno}

%
%
%
\theoremstyle{definition}
\newtheorem{definition}{Definition}[section]
\newtheorem{theorem}[definition]{Theorem}
\newtheorem*{theorem*}{Conjecture}
\newtheorem{proposition}[definition]{Proposition}
\newtheorem{lemma}[definition]{Lemma}

\theoremstyle{remark}
\newtheorem{remark}[definition]{Remark}
\newtheorem{example}[definition]{Example}
\newcounter{enumctr}

%
%
\newcommand{\R}{\mathbb{R}}
\newcommand{\N}{\mathbb{N}}

\def\du{{\ensuremath{\mathrm{d}}}}

\setlength{\parindent}{0cm}
\setlength{\parskip}{2ex}
%

\begin{document}
\begin{frontmatter}
	\title{On the Mittag-Leffler stability of mixed-order fractional homogeneous cooperative delay systems} 
\author[add1]{La Van Thinh}
\author[add3]{Hoang The Tuan}
\address[add1]{Academy of Finance, No. 58, Le Van Hien St., Duc Thang Wrd., Bac Tu Liem Dist., Hanoi 10307, Viet Nam}
\address[add3]{Institute of Mathematics, Vietnam Academy of Science and Technology, 18 Hoang Quoc Viet, Ha Noi 10307, Viet Nam}
\begin{abstract}
In this paper, we study a class of multi-order fractional nonlinear delay systems. Our main contribution is to show the (local or global) Mittag-Leffler stability of systems when some structural assumptions are imposed on the ``vector fields": cooperativeness, homogeneity, and order-preserving on the positive orthant of the phase space. In particular, our method is applicable to the case where the degrees of homogeneity of the non-lag and lag components of the vector field are different. In addition, we also investigate in detail the convergence rate of the solutions to the equilibrium point. Two specific examples are also provided to illustrate the validity of the proposed theoretical result.
\end{abstract}
\begin{keyword}
Fractional nonlinear delay systems, homogeneous cooperative systems, order-preserving vector fields, Mittag-Leffler stability, convergence rate of solutions.
\end{keyword}

\end{frontmatter}
\section{Introduction}
A Positive system (a system with the property that a non-negative input will result in a non-negative output) plays an important role in modeling many important problems in real life because many quantities in physics, state variables in chemical reactions and bio-ecological models are naturally constrained to be non-negative. Besides, the delay differential equation is an important subject in the qualitative theory of dynamical systems because many realistic processes and phenomena depend on history. Therefore, there has been a large amount of published literature concerned with the delay positive systems (see, e.g., \cite{benvenuti-farina,Haddad_book,Hien21,Rantzer}).

Consider the simplest linear differential system
\begin{equation}\label{introeq1}
\begin{cases}
     \displaystyle\frac{d}{dt}  x(t)&=Ax(t),\;\forall t\geq 0,\\
     x(0)&=x_0\in\R^d,
     \end{cases}
\end{equation}
 where $A\in\R^{d\times d}$. It is not difficult to check that this system is {\em positive} if and only if $A$ is {\em Metzler}. Moreover,  the positive system \eqref{introeq1} is {\em asymptotically stable} if and only if there is a positive $v\succ 0$ with $Av \prec 0$ (see, e.g., \cite{Farina}).
For the simplest form of delayed linear systems
\begin{equation}\label{introeq2}
\begin{cases}
     \displaystyle\frac{d}{dt}  x(t)&=Ax(t)+Bx(t-r),\;\forall t\geq 0,\\
     x(0)&=x_0\in\R^d,
     \end{cases}
\end{equation}
 where $A,B\in\R^{d\times d}$, $r\geq 0$, in \cite{Haddad}, the authors have shown that it is positive if and only if $A$ is Metzler, $B$ is nonnegative. They have also proven that the positive delay system \eqref{introeq2} is asymptotically stable if and only if there exists a positive $v\succ 0$ satisfying $(A+B)v \prec 0$. Then, a such result has been extended
to cooperative homogeneous systems with the degree $\alpha=1$ by O. Mason and M. Verwoerd \cite{Mason}. Later, in \cite{Bokharaie}, it was further extended to cooperative delay systems with the degree of homogeneity $\alpha>0$ (with respect to a dilation map). Recently, there have been contributions on the convergence rate of solutions of generalized cooperative homogeneous systems with bounded delays to their equilibrium point established by J.G. Dong \cite{Dong} and Q. Xiao et al. \cite{Xiao}.

Fractional calculus is a useful and suitable tool for describing processes or materials' memory and hereditary properties. It is a significant advantage over classical models where such effects are ignored. For interested readers, the latest applications of fractional order differential equations can be found in the survey paper \cite{Sun_18} and updated monographs, see, for example, \cite{Baleanu_1,Baleanu_2, Petras, Tarasov_1,Tarasov_2} and references therein.

For the above reasons, fractional positive delay systems promise to be a useful tool
in describing the dynamic properties of memory-dependent phenomena.

The two biggest challenges in studying fractional-order differential equations are that their solutions are non-local and the fractional-order derivatives have no geometric explanation. These lead to the fact that one cannot apply Lyapunov's classic methods to these equations. The situation becomes especially difficult for non-commensurate systems where a variation of constants formula which is the most essential part of the linearization approach is absent. Recently, H.T. Tuan and L.V. Thinh \cite{Tuan_Thinh_Esaim, Thinh_Tuan_CNSNS} explored that the solutions of non-commensurate positive linear equations have separation properties. They then developed comparative arguments to analyze the solutions of these systems.

With the desire to design a system in which the positivity of the solutions is guaranteed, the time delay dependence is expressed and the influence of the entire past of the process is reflected, inspired by \cite{Dong,Tuan_Thinh_Esaim,Thinh_Tuan_CNSNS}, we are interested in non-commensurate nonlinear delay systems with some structural assumptions imposed on the vector fields so that the order relation on the phase space is preserved. More precise, our main object in the paper is the system:
\begin{equation} \label{Eq main}
	\begin{cases}
		^{\!C}D^{{\hat\alpha}}_{0^+}{w}(t)&={ f}({ w}(t))+\displaystyle\sum_{j=1}^m{ g^{(j)}}({ w}(t-\tau_j(t))),\ \forall t> 0,\\
		{w}(s)&={\varphi}(s), \ \forall s\in[-r,0],
	\end{cases}
\end{equation}
where $\hat\alpha\in (0,1]\times\cdots\times (0,1]$, $m\geq 1$, $r_j>0$, $j=1,\dots,m$, are given nonnegative constants, $\tau_j:[0,\infty)\rightarrow [0,r_j]$, $1\leq j\leq m$, are continuous, $r=\displaystyle\max_{1\leq j\leq m}r_j$ and  ${\varphi}:[-r,0]\rightarrow\R_{\geq 0}^d$ is a continuous initial condition, $f(\cdot)$, ${g^{(j)}}(\cdot)$, $j = 1,\dots, m,$ satisfy following assumption.\\

\noindent{\bf Assumption (H1)}: ${f}(\cdot)$  is {\em cooperative}\footnote{see in Definition \ref{Dfcooperative}} on $\R^d$ and is {\em homogeneous}\footnote{see in Definition \ref{Dfhomogeneous}} of degree $p\geq 1$.\\
{\bf Assumption (H2)}: ${g^{(j)}}(\cdot)$ is {\em order-preserving}\footnote{see in Definition \ref{Dfpreserving}} on $\R^d_{\ge0}$ and is homogeneous of degree $q_j\geq p\geq 1$.\\
{\bf Assumption (S)}: There exists a vector ${v} \succ {0}$ such that ${f}({v})+\displaystyle \sum_{j=1}^m{g^{(j)}}({v})\prec {0}.$ 

Based on some special features of the Mittag-Leffler functions and comparison arguments, our main contribution is to prove the Mittag-Lefler stability of system \eqref{Eq main}. In particular, we explore the rate of convergence of the solutions to its equilibrium point. Additionally, depending on the degree of homogeneity of the functions $f$ and $g^{j}$, $j=1,\dots,m$, a result on local or global attractiveness of the equilibrium point will be derived. This is a continuation of recent published papers, see, e.g., \cite{Mason, Feyzma_2, Dong, Tuan_Trinh_Siam, THL_21, Tuan_Thinh_Esaim, KT_23}. Finally, numerical examples are provided to illustrate the theoretical findings.
\section{Notations and preliminaries}
\subsection{Notations}
Throughout the paper, the following notations are used: $\R,\ \N$ is the set of real
numbers, natural numbers, $\R_{\geq 0}:=\{x\in \R:x\geq 0\},\ \R_+:=\{x\in \R:x> 0\};$ $\R^d$ stands for the $d$-dimensional
Euclidean space, $\R_{\geq 0}^d$ is the subset of $\R^d$ with
nonnegative entries and ${\R_+^d}:=\left\{{x}=(x_1,\dots,x_d)^{\rm T}\in \R^d:x_i >0,\ 1\le i\le d\right\}$. For two vectors ${ w,u}\in \R^d$, we write
\begin{itemize}
	\item ${u}\preceq {w}$ if $u_i\leq w_i$, $1\leq i\leq d.$  
	\item ${u}\prec {w}$ if $u_i< w_i$, $1\leq i\leq d.$
\end{itemize}
Let $r>0$, we denote $B_r({0}):=\{{x}\in \R^d:\|{x}\|\leq r\}$ and  $\partial B_r({0}):=\{{x}\in \R^d:\|{x}\|= r\}$. For a vector valued function  ${ f}: \R^d \longrightarrow \R^d$ which is differentiable at ${ x}\in \R^d$, we set $D{ f}({ x}):=\big(\displaystyle\frac{\partial f_i}{\partial x_j}({x})\big)_{1\leq i,j\leq d}$. Fixing a vector ${ v}\succ 0$, the weighted  norm $\|.\|_{ v}$ is given by	
$$\|{w}\|_{v}:=\max_{1\le i \le d} \frac{|w_i|}{v_i}.$$
A real matrix $A=(a_{ij})_{1\leq i,j\leq d}$ is called as Metzler if its off-diagonal entries $a_{ij},\ \forall i\ne j$, are nonnegative. 

Let $\alpha \in (0,1]$ and $J = [0, T]$, the Riemann-Liouville fractional integral of a function $x :J \rightarrow \mathbb R$ is as 
	$$ 
		I^\alpha_{0^+}x(t) := \frac{1}{\Gamma(\alpha)}\int_{0}^{t}(t-s)^{\alpha -1}x(s) \, \du s,\ \quad t\in J,
	$$
	and the Caputo fractional derivative of the order $\alpha$ is given by
	$$ 
		^C D^\alpha_{0^+}x(t) := \frac{d}{dt}I^{1-\alpha}_{0^+}(x(t) - x(0)), \quad t \in J \setminus \{ 0 \},
	$$
	here $\Gamma(\cdot)$ is the Gamma function, $\displaystyle\frac{d}{d t}$ is the first derivative (see, e.g., 
	\cite[Chapters 2 and 3]{Kai} and \cite{Vainikko_16} for more detail on fractional calculus). For $d\in\N,\ {\hat\alpha}:=(\alpha_1,\dots,\alpha_d)\in (0,1]\times\cdots\times (0,1]$ and a function ${ w} : J \rightarrow \R^d,$ then
$$^{\!C}D^{{\hat\alpha}}_{0^+}{ w}(t):=\left(^{\!C}D^{\alpha_1}_{0^+}w_1(t),\dots,^{\!C}D^{\alpha_d}_{0^+}w_d(t)\right)^{\rm T}.$$
\begin{definition}
Let $\alpha,\beta>0$. The Mittag-Leffler function $E_{\alpha,\beta}(\cdot):\R\rightarrow \R$ is defined by
\[
E_{\alpha,\beta}(x)=\sum_{k=0}^\infty \frac{x^k}{\Gamma(\alpha k+\beta)},\;\forall x\in \R.
\]
In the case $\beta=1$, for simplicity we use convention $E_\alpha(x):=E_{\alpha,1}(x)$ for all $x\in\R$.
\end{definition}
\begin{definition}{(see, e.g., \cite[Definition 2.4]{Feyzmahdavian1})}\label{Dfpreserving} 
Let $k,n\in \mathbb N$ and a closed convex cone  $C\subset \R^k$. A function  $f: \R^k \rightarrow \R^n$  is said to be order-preserving on $C$ if $f({ u})\succeq f({ v})$ for any  ${ u},{ v} \in C$ satisfying $ { u} \succeq { v}$.
\end{definition} 
\begin{definition} {\cite[Definition 2.3]{Feyzmahdavian1}}\label{Dfhomogeneous}
	For any $p \ge 0$, a vector field ${ f}: \R^d \longrightarrow \R^d$ is said to be homogeneous of degree $p$ if for all ${x}\in \R^d$ and for all $\lambda >0$, we have
	$${f}({\lambda}({ x}))=\lambda^p {f}({ x}).$$
\end{definition}
\begin{definition} \cite[Definition 2]{Xiao}\label{Dfcooperative}
	A continuous vector field ${f}: \R^d \longrightarrow \R^d$ which is
	continuously differentiable on $\R^d\backslash \left\{0\right\}$ is said to be cooperative if the
	Jacobian matrix $D{f}({x})$ is Metzler for all ${x}\in \R_{\geq 0}^d\backslash \left\{0\right\}$.
\end{definition}
\subsection{Preliminaries}
We provide here some essential materials for further analysis in the next section.
\begin{lemma}
	(see, e.g., \cite[Lemma 3.2]{NNThang_23}, \cite[Lemma 7]{Shuailei_Zhang}) \label{Lemma-ML} If $\eta>0$ and $\alpha\in(0,1]$, then for all $t\ge0,\ s\ge0$, we have
	\[E_\alpha(-\eta t^\alpha)E_\alpha(-\eta s^\alpha)\le E_\alpha(-\eta(t+s)^\alpha)\]
\end{lemma}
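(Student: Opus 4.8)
The assertion is the inequality $F(t)F(s)\le F(t+s)$, where $F(t):=E_\alpha(-\eta t^\alpha)$ satisfies $F(0)=E_\alpha(0)=1$. When $\alpha=1$ one has $F(t)=\eu^{-\eta t}$ and equality holds, so the plan is to treat $\alpha\in(0,1)$. My strategy is to prove that $F$ is \emph{log-convex} on $[0,\infty)$ and then to invoke the elementary fact that a convex function vanishing at the origin is superadditive.

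First I would establish log-convexity of $F$. The starting point is the classical result of Pollard that $x\mapsto E_\alpha(-x)$ is completely monotone on $[0,\infty)$ for $\alpha\in(0,1]$; since $t\mapsto\eta t^\alpha$ is a nonnegative Bernstein function, the composition $F(t)=E_\alpha(-\eta t^\alpha)$ is completely monotone on $[0,\infty)$ as well. Hence, by Bernstein's theorem, $F(t)=\int_0^\infty \eu^{-t\sigma}\,\du\mu(\sigma)$ for a probability measure $\mu$ on $[0,\infty)$ (a probability measure because $F(0)=1$); alternatively this representation can be written down explicitly via the Mainardi subordination formula $E_\alpha(-\eta t^\alpha)=\int_0^\infty M_\alpha(\sigma)\,\eu^{-\eta^{1/\alpha}t\sigma}\,\du\sigma$ with $M_\alpha\ge 0$, $\int_0^\infty M_\alpha=1$. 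Applying Hölder's inequality, for $u,v\ge0$ and $\theta\in[0,1]$,
\[
F(\theta u+(1-\theta)v)=\int_0^\infty (\eu^{-u\sigma})^{\theta}(\eu^{-v\sigma})^{1-\theta}\,\du\mu(\sigma)\le F(u)^{\theta}F(v)^{1-\theta},
\]
so $\phi:=\log F$ is convex on $[0,\infty)$ with $\phi(0)=0$.

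I would then finish with the superadditivity step: for $t,s>0$, convexity of $\phi$ together with $\phi(0)=0$ gives $\phi(t)\le\frac{t}{t+s}\,\phi(t+s)$ and $\phi(s)\le\frac{s}{t+s}\,\phi(t+s)$, and adding these yields $\phi(t)+\phi(s)\le\phi(t+s)$; exponentiating gives $F(t)F(s)\le F(t+s)$, which is precisely the claim (the cases $t=0$ or $s=0$ being trivial since $F(0)=1$). The only substantial ingredient is the complete monotonicity — equivalently the log-convexity — of $t\mapsto E_\alpha(-\eta t^\alpha)$; once that is granted the remainder is a one-line convexity argument, so this is the step I expect to be the main obstacle. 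A route that avoids the abstract input would be to first use the elementary bound $(t+s)^\alpha\le t^\alpha+s^\alpha$ (valid for $\alpha\in(0,1]$) and the monotonicity of $E_\alpha$ on $(-\infty,0]$ to reduce the claim to $E_\alpha(-a)E_\alpha(-b)\le E_\alpha(-(a+b))$ for $a,b\ge0$, but this still rests on the log-convexity of $a\mapsto E_\alpha(-a)$, so the complete-monotonicity fact remains the crux.
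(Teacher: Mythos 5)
Your argument is correct. The paper itself gives no proof of Lemma \ref{Lemma-ML} --- it simply defers to the cited references --- and the proofs there rest on exactly the ingredient you identify, namely Pollard's theorem that $x\mapsto E_\alpha(-x)$ is completely monotone for $\alpha\in(0,1]$, so your route (complete monotonicity of $t\mapsto E_\alpha(-\eta t^\alpha)$ via composition with the Bernstein function $\eta t^\alpha$, then log-convexity by H\"older on the Bernstein representation, then superadditivity of the convex function $\log F$ vanishing at $0$) is essentially the standard one and every step checks out, including the positivity of $E_\alpha(-x)$ needed to take logarithms and the trivial cases $t=0$, $s=0$, $\alpha=1$.
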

\begin{proposition} \cite[Remark 3.1]{H.L.Smith} \label{Pro_Cooperative}
	Let ${f}: \R^d \longrightarrow \R^d$ be a
	cooperative vector field. For any two vectors ${u,w}\in \R_{\geq 0}^d$ with $u_i=w_i,\; i\in\{1,\cdots,d\}$
	and ${u\succeq w}$, we have $$f_i({ u})\ge f_i({ w}).$$
\end{proposition}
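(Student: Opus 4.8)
The plan is to exploit the Metzler structure of the Jacobian assumed in Definition~\ref{Dfcooperative} (a Kamke-type quasimonotonicity property) and to derive the inequality by integrating the $i$-th component of $f$ along the straight segment from $w$ to $u$, along which the $i$-th coordinate stays frozen.

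Fix an index $i\in\{1,\dots,d\}$. If $u=w$ the claim is trivial, so assume $u\succeq w$ with $u\ne w$, and set $\gamma(t):=(1-t)w+tu$ for $t\in[0,1]$. Since $w,u\in\R^d_{\ge0}$ and $1-t,t\ge0$, each $\gamma(t)$ is a convex combination of nonnegative vectors, hence $\gamma(t)\in\R^d_{\ge0}$; moreover its $i$-th coordinate is constantly equal to $w_i=u_i$. I would first note that $\gamma(t)\ne 0$ for every $t\in(0,1]$: from $(1-t)w+tu=0$ with $0<t\le 1$ one gets, entrywise, $u=0$ (and, when $t<1$, also $w=0$), hence $u=w$, a contradiction. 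Thus $\gamma$ takes values in $\R^d_{\ge0}\setminus\{0\}$ on $(0,1]$, where $f$ is continuously differentiable by Definition~\ref{Dfcooperative}.

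Next I would study $g(t):=f_i(\gamma(t))$, which is continuous on $[0,1]$ because $f$ is continuous on all of $\R^d$, and differentiable on $(0,1)$ with
\[
g'(t)=\sum_{j=1}^d\frac{\partial f_i}{\partial x_j}(\gamma(t))\,(u_j-w_j).
\]
The contribution of $j=i$ vanishes since $u_i-w_i=0$; for $j\ne i$ the factor $\dfrac{\partial f_i}{\partial x_j}(\gamma(t))$ is nonnegative because $Df(\gamma(t))$ is Metzler, and $u_j-w_j\ge 0$ because $u\succeq w$. Hence $g'(t)\ge 0$ on $(0,1)$, so $g$ is nondecreasing on $[0,1]$, and therefore $f_i(u)=g(1)\ge g(0)=f_i(w)$.

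The only point that requires a little care — and the single, still minor, obstacle — is that $f$ need not be differentiable at the origin, so the chain rule is unavailable at $t=0$ when $w=0$; this is exactly why I would run the monotonicity argument through the continuous function $g$ on the closed interval with $g'\ge0$ only on the open interval, rather than writing $f_i(u)-f_i(w)=\int_0^1 g'(t)\,\du t$ directly. Everything else is the elementary sign bookkeeping made possible by the Metzler property together with the frozen $i$-th coordinate.
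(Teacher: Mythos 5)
Your argument is correct: the paper offers no proof of this proposition (it is simply cited from Smith's monograph), and your segment-integration argument using the Metzler property of $Df$ off the diagonal, together with the frozen $i$-th coordinate, is exactly the standard quasimonotonicity (Kamke-condition) reasoning behind that reference. Your extra care at the origin — running the monotonicity of $g$ through continuity on $[0,1]$ with $g'\ge 0$ only on $(0,1)$, and checking that the segment avoids $0$ except possibly at an endpoint — is a genuine and worthwhile refinement here, since Definition~\ref{Dfcooperative} only guarantees differentiability of $f$ on $\R^d\setminus\{0\}$.
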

\begin{proposition}\cite[Lemma 2.1]{Mason}\label{glcd1}
	Suppose that ${f}: \R^d \rightarrow \R^d$ is continuous and is
	continuously differentiable on $\R^d\backslash \left\{{0}\right\}$. Moreover, this function is homogeneous of degree $p=1$ (or simply homogeneous). Then, there exists a positive constant $K$ such that $$\|{f}({x})-{f}(y)\|\leq K\|{x}-{y}\|,\ \forall { x,\ y}\in \R^d.$$ 
\end{proposition}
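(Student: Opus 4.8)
The plan is to show that $f$ is globally Lipschitz with the explicit constant $K:=\sup_{\|z\|=1}\|Df(z)\|$, the crux being that the Jacobian of a vector field homogeneous of degree one is itself homogeneous of degree zero and hence uniformly bounded on $\R^d\setminus\{0\}$. First I would record two elementary consequences of homogeneity. Putting $x=0$ in $f(\lambda x)=\lambda f(x)$ (valid for all $\lambda>0$) gives $f(0)=0$. Differentiating $f(\lambda x)=\lambda f(x)$ with respect to $x$ on $\R^d\setminus\{0\}$ and applying the chain rule yields $\lambda\,Df(\lambda x)=\lambda\,Df(x)$, that is $Df(\lambda x)=Df(x)$ for every $\lambda>0$ and every $x\neq 0$. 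Since $Df$ is continuous on the compact set $\partial B_1({0})$, the number $K$ is finite, and by the degree-zero homogeneity just obtained we get $\|Df(z)\|\le K$ for all $z\in\R^d\setminus\{0\}$.

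Next I would run the mean value estimate wherever it is legitimate. Fix $x\neq y$ and suppose the closed segment $[x,y]:=\{(1-t)y+tx:\ t\in[0,1]\}$ does not contain the origin. Then $f$ is continuously differentiable on a neighbourhood of $[x,y]$, so the fundamental theorem of calculus gives
\[
f(x)-f(y)=\int_{0}^{1}Df\big((1-t)y+tx\big)(x-y)\,\du t ,
\]
and therefore $\|f(x)-f(y)\|\le K\,\|x-y\|$. It then remains to deal with the degenerate configurations in which $0\in[x,y]$: either one of the two points equals $0$, or $x$ and $y$ lie on opposite rays through the origin. For these I would invoke a density argument: choose sequences $x_n\to x$ and $y_n\to y$ such that each segment $[x_n,y_n]$ avoids $0$ (a generic arbitrarily small perturbation off the line through $0$, $x$, $y$ does the job), apply the bound just proved to obtain $\|f(x_n)-f(y_n)\|\le K\,\|x_n-y_n\|$, and pass to the limit using continuity of $f$ on all of $\R^d$. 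This extends $\|f(x)-f(y)\|\le K\,\|x-y\|$ to every pair $x,y\in\R^d$, completing the proof.

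The only delicate point is precisely this last extension: since $f$ is merely continuous — not differentiable — at the origin, the integral representation above is not available across a segment passing through $0$, and the approximation step is what bridges this gap. (Alternatively, the two bad cases can be handled directly: if $y=0$ then homogeneity gives $\|f(x)\|=\|x\|\,\|f(x/\|x\|)\|\le\|x\|\sup_{\|z\|=1}\|f(z)\|$; and if $x=au$, $y=-bu$ with $\|u\|=1$ and $a,b>0$, then $\|f(x)-f(y)\|\le a\|f(u)\|+b\|f(-u)\|\le (a+b)\sup_{\|z\|=1}\|f(z)\|=\|x-y\|\sup_{\|z\|=1}\|f(z)\|$, after which one simply enlarges $K$ so that it also dominates $\sup_{\|z\|=1}\|f(z)\|$, which is finite by compactness of $\partial B_1({0})$ and continuity of $f$.) Everything else is routine.
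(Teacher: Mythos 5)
The paper states this proposition as a quoted result (\cite[Lemma 2.1]{Mason}) and gives no proof of its own, so there is nothing internal to compare against; judged on its own terms, your argument is correct and is the standard proof of this lemma. The key points — $Df$ is homogeneous of degree zero and hence uniformly bounded by its supremum on the unit sphere, the mean value inequality applies along any segment avoiding the origin, and the exceptional configurations (an endpoint at $0$, or $x$ and $y$ on opposite rays) are recovered either by perturbation or, as in your parenthetical remark, directly from $f(\lambda u)=\lambda f(u)$ and compactness of $\partial B_1(0)$ — are all handled; note only that the perturbation argument needs $d\ge 2$, so for $d=1$ one must use the direct treatment of the opposite-ray case, which you supply.
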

\begin{proposition}{\cite[Proposition 2.8]{Thinh_Tuan_23}}\label{loclc}
	Suppose that ${f}: \R^d \rightarrow \R^d$ is continuous and is continuously differentiable on $\R^d\backslash \left\{{0}\right\}$. In addition, we assume that ${f}$ is homogeneous of degree $p>1$. Then, for any $r>0$, we can find a positive constant $K$ such that $$\|{f}({x})-{f}({y})\|\leq K\|{x}-{y}\|,\ \forall { x,\ y}\in \mathcal{B}_r({0}).$$ 
\end{proposition}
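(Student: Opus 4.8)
The plan is to exploit that the Jacobian $Df$ inherits homogeneity of degree $p-1$ from $f$, so that it is controlled by its values on the compact unit sphere, and then to conclude via the mean value inequality on the convex ball $\mathcal{B}_r(0)$.

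First I would record two elementary consequences of homogeneity. Taking $x = 0$ and $\lambda = 2$ in $f(\lambda x) = \lambda^p f(x)$ gives $f(0) = 2^p f(0)$, hence $f(0) = 0$. Next, differentiating the identity $f(\lambda x) = \lambda^p f(x)$ in $x$ yields $\lambda\, Df(\lambda x) = \lambda^p Df(x)$, i.e. $Df(\lambda x) = \lambda^{p-1} Df(x)$ for all $x \neq 0$ and all $\lambda > 0$; thus $Df$ is homogeneous of degree $p-1$ on $\R^d \setminus \{0\}$.

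Second, since $Df$ is continuous on the unit sphere $\partial\mathcal{B}_1(0)$, which is compact, the constant $M := \sup_{\|z\|=1}\|Df(z)\|$ is finite. Writing any $x \neq 0$ as $x = \|x\|\,(x/\|x\|)$ and using the degree-$(p-1)$ homogeneity of $Df$ gives $\|Df(x)\| = \|x\|^{p-1}\,\|Df(x/\|x\|)\| \le M\,\|x\|^{p-1}$. Because $p > 1$, the right-hand side tends to $0$ as $x \to 0$; combined with the estimate $\|f(x)\| = \|x\|^p\|f(x/\|x\|)\| \le M'\|x\|^p = o(\|x\|)$, where $M' := \sup_{\|z\|=1}\|f(z)\|$, this shows $f$ is differentiable at $0$ with $Df(0) = 0$ and that $Df$ is continuous at $0$. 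Hence $f \in C^1(\R^d)$ and $\sup_{x \in \mathcal{B}_r(0)}\|Df(x)\| \le M r^{p-1} =: K$.

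Finally, the conclusion follows from the mean value inequality: for $x, y \in \mathcal{B}_r(0)$ the segment $[x,y]$ lies in the convex set $\mathcal{B}_r(0)$, so $\|f(x) - f(y)\| \le \big(\sup_{z \in [x,y]}\|Df(z)\|\big)\,\|x - y\| \le K\,\|x - y\|$. The only delicate point is the behaviour at the origin, where $f$ is merely assumed continuous a priori; the step that removes this obstacle is the observation that $p > 1$ forces $Df$ to extend continuously by the value $0$ across $0$, so that the classical $C^1$ mean value inequality applies on the whole ball without having to split the segment. Alternatively, one could bypass the $C^1$-extension and instead treat separately the case $0 \in [x,y]$, bounding $\|f(x) - f(y)\| \le \|f(x)\| + \|f(y)\|$ and using $\|f(x)\| \le M'\|x\|^p \le M' r^{p-1}\|x\|$ together with the analogous bound for $y$.
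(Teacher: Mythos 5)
Your proof is correct, and it is the standard argument for this fact: homogeneity of degree $p$ forces $f(0)=0$ and makes $Df$ homogeneous of degree $p-1$, so $\|Df(x)\|\le M\|x\|^{p-1}$ with $M=\sup_{\|z\|=1}\|Df(z)\|$; since $p>1$ this extends $Df$ continuously by $0$ at the origin, and the mean value inequality on the convex ball gives the Lipschitz bound with $K=Mr^{p-1}$. The paper itself does not reprove this proposition --- it is quoted from an external reference --- so there is nothing to compare against, but your argument (including the fallback of splitting off the case $0\in[x,y]$, where indeed $\|x\|+\|y\|=\|x-y\|$) is complete and exactly what one would expect the cited proof to be.
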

\begin{lemma}\label{compare-FDE}
	Let $w : [0, T] \rightarrow \R$ be continuous and assume that the Caputo derivative
	$^{\!C}D^{\alpha}_{0^+}w(\cdot)$ is also continuous on the interval $[0, T]$ with $\alpha\in (0,1]$. If there exists $t_0>0$ such that $w(t_0)=0$ and $w(t)<0,\ \forall t\in [0,t_0)$, then 
 \begin{itemize}
     \item[(i)] $^{\!C}D^{\alpha}_{0^+}w(t_0)> 0$ for $0<\alpha<1$;
     \item[(ii)] $^{\!C}D^{\alpha}_{0^+}w(t_0)\geq 0$ for $\alpha=1$.
 \end{itemize}
\end{lemma}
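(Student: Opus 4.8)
The plan is to compute the Caputo derivative directly from its integral representation at the point $t_0$ and to exploit the sign information on $w$. First I would write, using the definition and the fact that $w$ is continuous with continuous Caputo derivative on $[0,T]$ (so that the integration-by-parts identity valid for $C^1$ functions extends by density, or alternatively using the representation $^{\!C}D^{\alpha}_{0^+}w(t)=\frac{1}{\Gamma(1-\alpha)}\frac{d}{dt}\int_0^t (t-s)^{-\alpha}(w(s)-w(0))\,\du s$), the standard formula
\[
^{\!C}D^{\alpha}_{0^+}w(t_0)=\frac{1}{\Gamma(1-\alpha)}\left(\frac{w(t_0)-w(0)}{t_0^{\alpha}}+\alpha\int_0^{t_0}\frac{w(t_0)-w(s)}{(t_0-s)^{\alpha+1}}\,\du s\right).
\]
This identity is the key technical input; deriving it rigorously under the stated hypotheses (rather than assuming $w\in C^1$) is the step I expect to require the most care, since one must justify differentiating under the integral and the convergence of the resulting singular integral near $s=t_0$.

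Once this representation is in hand, the conclusion is almost immediate. Substituting $w(t_0)=0$ gives
\[
^{\!C}D^{\alpha}_{0^+}w(t_0)=\frac{1}{\Gamma(1-\alpha)}\left(\frac{-w(0)}{t_0^{\alpha}}-\alpha\int_0^{t_0}\frac{w(s)}{(t_0-s)^{\alpha+1}}\,\du s\right).
\]
By hypothesis $w(s)<0$ for all $s\in[0,t_0)$, so in particular $w(0)<0$, making the first term strictly positive; and the integrand $-w(s)/(t_0-s)^{\alpha+1}$ is strictly positive on $(0,t_0)$, so for $0<\alpha<1$ (where $\Gamma(1-\alpha)>0$ and the weight $\alpha(t_0-s)^{-\alpha-1}$ is a genuine, positive, non-integrable-at-$t_0$-but-the-numerator-vanishes kernel) the integral term is nonnegative. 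Hence $^{\!C}D^{\alpha}_{0^+}w(t_0)>0$, which is part (i). For part (ii), when $\alpha=1$ the Caputo derivative reduces to the ordinary derivative $w'(t_0)$; since $w(t)<0=w(t_0)$ for $t<t_0$, the difference quotient $\big(w(t_0)-w(t)\big)/(t_0-t)$ is nonnegative for $t<t_0$, so passing to the limit yields $w'(t_0)\ge 0$. (One cannot upgrade this to a strict inequality — e.g. $w(t)=-(t-t_0)^2$ near $t_0$ — which is exactly why the two cases are stated separately.)

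An alternative, perhaps cleaner, route for (i) that avoids the delicate differentiation-under-the-integral argument is to apply the known result that for a function attaining its maximum over $[0,t_0]$ at the endpoint $t_0$ one has $^{\!C}D^{\alpha}_{0^+}w(t_0)\ge 0$ (a fractional analogue of the fact that a $C^1$ function has nonnegative derivative at a right-endpoint maximum), and then to argue strictness: if $^{\!C}D^{\alpha}_{0^+}w(t_0)=0$ were to hold, the representation above forces $\int_0^{t_0}\frac{-w(s)}{(t_0-s)^{\alpha+1}}\,\du s$ and $-w(0)/t_0^\alpha$ to both vanish, contradicting $w(0)<0$. Either way, the heart of the matter is the endpoint representation of the Caputo derivative together with the strict negativity of $w$ on $[0,t_0)$, and the $\alpha=1$ case is a genuine degeneration handled by elementary calculus.
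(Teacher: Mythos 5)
Your argument is correct and is essentially the paper's proof spelled out: the paper disposes of case (ii) as ``obvious'' (your difference-quotient argument) and for case (i) simply cites Vainikko's Theorem~1, which is precisely the endpoint (Marchaud-type) representation
$^{\!C}D^{\alpha}_{0^+}w(t_0)=\frac{1}{\Gamma(1-\alpha)}\bigl(\frac{w(t_0)-w(0)}{t_0^{\alpha}}+\alpha\int_0^{t_0}\frac{w(t_0)-w(s)}{(t_0-s)^{\alpha+1}}\,\du s\bigr)$
that you identify as the key technical input, valid under the stated continuity hypotheses. Your substitution of $w(t_0)=0$ and the sign analysis then coincide with how that citation is meant to be used.
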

\begin{proof}
The conclusion of the case $\textup{(ii)}$ is obvious. The proof of the case $\textup{(i)}$ follows directly from \cite[Theorem 1]{Vainikko_16}.
\end{proof}
\section{Mittag-Leffler stability of homogeneous cooperative delay systems}
This part introduces our main contribution concerning the Mittag-Leffler stability of system \eqref{Eq main}. To do this, we first need results concerning the global existence, boundedness, and positivity of the solutions.

\subsection{Boundedness and positivity of solutions}
Consider the multi-order fractional homogeneous cooperative systems with bounded delays \eqref{Eq main} as below.
\begin{equation*} 
	\begin{cases}
		^{\!C}D^{{\hat\alpha}}_{0^+}{w}(t)&={f}({w}(t))+\displaystyle\sum_{j=1}^m{g}^{(j)}({w}(t-\tau_j(t))),\ \forall t> 0,\\
		{w}(s)&=\varphi(s),\ \forall s\in[-r,0].
	\end{cases}
\end{equation*}
Under the assumptions {\textup{\bf (H1)}} and {\textup{\bf (H2)}}, following from Proposition \ref{glcd1}, Proposition \ref{loclc} and the arguments as in the proof of \cite[Theorem 2.2]{Tuan_Trinh_Siam}, for each $\varphi\in C([-r,0];\R^d)$, system \eqref{Eq main} has a unique solution $\Phi(\cdot,\varphi)$ on the maximal interval of existence $[0, T_{max}(\varphi))$.	

Our aim in this subsection is to show the global existence, boundedness, and positivity of the solutions.
\begin{proposition} \label{boundsol}
The following assertions are true.
\begin{itemize}
	\item [(i)] Assume that the conditions {\textup{\bf (H1)}}, {\textup{\bf (H2)}} and {\textup{\bf (S)}} are satisfied. Moreover, the assumption {\textup{\bf (H2)}} is valid for some $q_{j}>p$.  Let $v\succ 0$ is a vector as in {\textup{\bf (S)}}. Then, for any $\varphi\in C([-r,0];\R^d_{\geq 0})$, $\varphi(0)\succ 0$ and $\|\varphi\|_v<1$, the solution $\Phi(\cdot,\varphi)$ of \eqref{Eq main} exists globally on $[0,\infty)$ and
	\[\|\Phi(t,\varphi)\|_{v}\leq \|\varphi\|_{v},\;\forall t\geq 0.\]
	\item [(ii)] Let the conditions {\textup{\bf (H1)}} and {\textup{\bf (S)}} be true and Assumption {\textup{\bf (H2)}} is satisfied for $q_j=p$, $j=1,\dots,m$. Take $v\succ 0$ as in {\textup{\bf (S)}}, then for any $\varphi\in C([-r,0];\R^d_{\geq 0})$, $\varphi(0)\succ 0$, the solution $\Phi(\cdot,\varphi)$ of \eqref{Eq main} exists globally on $[0,\infty)$ and
	\[\|\Phi(t,\varphi)\|_{ v}\leq \|\varphi\|_{ v},\;\forall t\geq 0.\]
\end{itemize}
\end{proposition}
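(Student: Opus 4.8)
The plan is to establish boundedness in the weighted norm $\|\cdot\|_v$ via a comparison argument with the scalar Mittag-Leffler functions, combined with positivity of the solution. I would first argue that $\Phi(t,\varphi) \succeq 0$ for all $t$ in the maximal interval of existence: since $\varphi \succeq 0$ and $\varphi(0) \succ 0$, if some component $\Phi_i$ first touches $0$ at a time $t_0$ while staying negative just before (after localizing), then on the one hand $^{C}D^{\alpha_i}_{0^+}\Phi_i(t_0) \ge 0$ by Lemma \ref{compare-FDE}(i)--(ii), while on the other hand the right-hand side at $t_0$ equals $f_i(\Phi(t_0,\varphi)) + \sum_j g^{(j)}_i(\Phi(t_0-\tau_j(t_0),\varphi))$; using Proposition \ref{Pro_Cooperative} (the $i$-th component of a cooperative field is monotone in the other coordinates when the $i$-th is fixed) together with $f_i(0)=0$ (homogeneity) and the order-preserving property of $g^{(j)}$ with $g^{(j)}_i(0)=0$, this right-hand side is $\le 0$ at the touching point if the solution were negative somewhere — forcing a contradiction, so $\Phi \succeq 0$ throughout. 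I would need to phrase the touching-time argument carefully (work component by component, possibly pass to $\Phi + \eps v$ and let $\eps \downarrow 0$) to make the strict-inequality hypothesis of Lemma \ref{compare-FDE} legitimate.

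Next, for the boundedness bound, set $c := \|\varphi\|_v$ (with $c<1$ in case (i), $c$ arbitrary in case (ii)) and aim to show $\|\Phi(t,\varphi)\|_v \le c$ for all $t\ge 0$. I would use a contradiction/first-exit argument: suppose not, and let $t^\ast$ be the first time $\|\Phi(t,\varphi)\|_v$ exceeds $c$, realized in component $i$, so $\Phi_i(t^\ast,\varphi) = c\, v_i$ and $\Phi_j(t,\varphi) \le c\, v_j$ for all $j$ and all $t \le t^\ast$ (and at $t-\tau_j(t) \le t^\ast$ too, using positivity so the delayed states are also $\preceq c v$). Consider $u(t) := c\,v_i - \Phi_i(t,\varphi)$, which is $\ge 0$ on $[0,t^\ast]$ and $=0$ at $t^\ast$; I want to show $^{C}D^{\alpha_i}_{0^+}u(t^\ast) \le 0$ to get a contradiction with the fractional extremum principle (or rather, to show it is $<0$ strictly, hence $u$ must have been negative just before, impossible). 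Compute $^{C}D^{\alpha_i}_{0^+}u(t^\ast) = -f_i(\Phi(t^\ast)) - \sum_j g^{(j)}_i(\Phi(t^\ast-\tau_j))$. Since $\Phi(t^\ast) \preceq c v$ with $i$-th coordinate equal to $cv_i$, Proposition \ref{Pro_Cooperative} gives $f_i(\Phi(t^\ast)) \le f_i(cv) = c^p f_i(v)$ by homogeneity of degree $p$; similarly the order-preserving property and homogeneity of degree $q_j$ give $g^{(j)}_i(\Phi(t^\ast-\tau_j)) \le g^{(j)}_i(cv) = c^{q_j} g^{(j)}_i(v)$. Hence
\[
{}^{C}D^{\alpha_i}_{0^+}u(t^\ast) \ge c^p f_i(v) + \sum_{j=1}^m c^{q_j} g^{(j)}_i(v).
\]
In case (ii), $q_j = p$, so the right side is $c^p\big(f_i(v) + \sum_j g^{(j)}_i(v)\big) < 0$ by (S). In case (i), $q_j > p$ and $0 < c < 1$, so $c^{q_j} \le c^p$ while $g^{(j)}_i(v)$ may have either sign — here I would instead bound $c^{q_j} g^{(j)}_i(v) \le c^p g^{(j)}_i(v)$ using $g^{(j)}_i(v) \ge 0$? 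That need not hold. The cleaner route: since $g^{(j)}$ is order-preserving on $\R^d_{\ge 0}$ and $0 \preceq cv$, we get $g^{(j)}_i(cv) \ge g^{(j)}_i(0) = 0$, so $g^{(j)}_i(v) = c^{-q_j} g^{(j)}_i(cv) \ge 0$ — good, $g^{(j)}_i(v) \ge 0$ always. Then in case (i), $c^{q_j} g^{(j)}_i(v) \le c^p g^{(j)}_i(v)$ because $q_j > p$ and $0<c<1$ force $c^{q_j} \le c^p$ and the coefficient is nonnegative. Therefore ${}^{C}D^{\alpha_i}_{0^+}u(t^\ast) \ge c^p\big(f_i(v) + \sum_j g^{(j)}_i(v)\big) < 0$, giving the desired contradiction via Lemma \ref{compare-FDE}.

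Finally, the uniform-in-time bound $\|\Phi(t,\varphi)\|_v \le \|\varphi\|_v$ on the whole maximal interval, together with the a priori estimate and the local Lipschitz bounds of Propositions \ref{glcd1} and \ref{loclc}, precludes blow-up and yields $T_{max}(\varphi) = \infty$ by the standard continuation criterion (if $T_{max} < \infty$ the solution would have to leave every compact set, contradicting the bound). The main obstacle I anticipate is the touching-time / first-exit argument at the fractional level: unlike the classical case, $^{C}D^{\alpha_i}_{0^+}u(t^\ast) \le 0$ does not follow merely from $u$ having an interior minimum, so one genuinely needs the strict negativity estimate above and Lemma \ref{compare-FDE}, and one must handle the possibility that $u$ vanishes on a whole subinterval before $t^\ast$ — which is why I would pass to the perturbed comparison function $u_\eps := (c+\eps)v_i - \Phi_i$ with $\eps>0$ arbitrarily small, run the argument there where strict inequalities are available, and then let $\eps \downarrow 0$. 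A secondary technical point is ensuring the delayed arguments $\Phi(t-\tau_j(t),\varphi)$ at $t=t^\ast$ indeed satisfy $\preceq cv$; this is where positivity of $\varphi$ on $[-r,0]$ (so $\|\varphi\|_v = c$ controls the history) and the already-established $\Phi \succeq 0$ on $[0,t^\ast]$ are both used.
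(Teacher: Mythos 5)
Your argument is essentially the paper's: a first-crossing-time comparison in the weighted norm $\|\cdot\|_v$, Lemma \ref{compare-FDE} as the fractional extremum principle, cooperativity/order-preservation plus homogeneity to bound the right-hand side at the crossing time, the key observations that $g^{(j)}_i(v)\ge 0$ and $c^{q_j}\le c^{p}$ when $c<1$ and $q_j\ge p$, an $\varepsilon$-perturbation to secure strict inequalities, and the continuation criterion to get $T_{\max}(\varphi)=\infty$ (you fold a positivity step into this proposition while the paper defers it to Proposition \ref{Sys positive}, but that is a cosmetic difference --- and arguably tidier, since Proposition \ref{Pro_Cooperative} is stated only on $\R^d_{\ge0}$). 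The one thing to repair is a sign: with $u=cv_i-\Phi_i$ one has ${}^{C}D^{\alpha_i}_{0^+}u(t^*)=-\bigl(f_i(\Phi(t^*,\varphi))+\sum_j g^{(j)}_i(\Phi(t^*-\tau_j(t^*),\varphi))\bigr)\ge -c^{p}\bigl(f_i(v)+\sum_j g^{(j)}_i(v)\bigr)>0$, and it is this strict positivity that contradicts the extremum principle's ${}^{C}D^{\alpha_i}_{0^+}u(t^*)\le 0$; your displayed bound ``$\ge$ a negative quantity'' as written yields no contradiction, though the correction is immediate and the rest of the argument goes through exactly as in the paper.
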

\begin{proof}
\textbf{Case 1:} There exists $q_{j_0}>p$ for some $j_0=1,\dots,m.$ The approach in the proof of this case is similar to that in \cite[Proposition 3.1]{Thinh_Tuan_23}. By virtue of Proposition \ref{glcd1} and Proposition \ref{loclc}, the vector valued functions $f$, $g$ are Lipschitz continuous on $B_r({0})$ for every $r>1.$ Let $v\succ 0$ be a vector as in the asumption {\textup{\bf (S)}}, $\varphi\in C([-r,0];\R^d_{\geq 0})$, $\varphi(0)\succ 0$ and $\|\varphi\|_v<1$. From \cite[Theorem 2.2]{Tuan_Trinh_Siam}, system \eqref{Eq main} has the unique solution $\Phi(\cdot,\varphi)$ on the maximal interval $[0,T_{\max}(\varphi)).$ 
Take ${\epsilon} >0$ be arbitrary such that $\|\varphi\|_v+\varepsilon<1$. For each $i=1,\dots,d$, we define $$y_i(t):=\frac{{\Phi}_i(t,\varphi)}{v_i} - \|\varphi\|_{v}-{\epsilon},\ \forall t\in [0,T_{\max}(\varphi)).$$
From the fact that $$y_i(0)=\frac{\varphi_i(0)}{v_i} - \|\varphi\|_{v}-{\epsilon}<0,\ \forall i=\overline{1,d},$$
due to the continuity of $y_j(\cdot)$, $y_i(t)$ is still positive when $t$ is close enough to $0$. Thus, if there is a $t\in (0,T_{\max}(\varphi))$ and an index $i$ with  $y_i(t)=0$, by choosing 
	$ t_*:=\inf\{t>0:\exists i= \overline{1,d}\ \text{such that}\ y_{i}(t)=0\},$ 
	then $t_*>0$ and there exists an index $ i^*$ which verify
	\begin{align} \label{t_*^}
		y_{i^*}(t_*)&=0\ \text{and}\ y_{i}(t_*)\le0,\ \forall i\ne i^*, \\
		y_{i^*}(t)&<0,\ \forall t\in[0,t_*),\;i=1,\dots,d. \notag
	\end{align}
Combining \eqref{t_*^} and Lemma \ref{compare-FDE}, it leads to \begin{equation}\label{contrabd}
^{C\!}D^{\alpha_{i^*}}_{0^+}y_{i^*}(t_*)\geq 0.\end{equation}
Furthermore, it is derived from \eqref{t_*^} that
\begin{align} 
	{\Phi}_{i^*}(t_*,\varphi)&=(\|\varphi\|_v+{\epsilon})v_{i^*},\label{estbd1}\\
	{\Phi}_{i}(t,\varphi)&\le(\|\varphi\|_v+{\epsilon})v_{i},\ \forall i=1,\dots,d,\ \forall t\in[0,t_*].\label{estbd2}
\end{align}
Using \eqref{estbd1}, \eqref{estbd2} and Proposition \ref{Pro_Cooperative}, then
\[f_{i^*}(\Phi(t_*,\varphi))\le f_{i^*}\left((\|\varphi\|_{v}+{\epsilon}){v}\right)=\left(\|\varphi\|_{v}+{\epsilon}\right)^p f_{i^*}({v}).\]
On the other hand, from \eqref{estbd2} and the assumption {\textup{\bf (H2)}}, the following estimates hold. 
\begin{itemize}
	\item If $t_*-\tau_j(t_*)\in[0,t_*]$, then
	\[g_{i^*}^{(j)}(\Phi(t_*-\tau_j(t_*),\varphi))\le g_{i^*}^{(j)}\left((\|\varphi\|_{v}+{\epsilon}){ v}\right)=\left(\|\varphi\|_{v}+{\epsilon}\right)^{q_j}g_{i^*}^{(j)}({ v}).\]
	\item If $t_*-\tau_j(t_*)\in[-r,0]$, then 
	\[g_{i^*}^{(j)}(\Phi(t_*-\tau_j(t_*),\varphi))=g_{i^*}^{(j)}(\varphi(t_*-\tau_j(t_*)))\le g_{i^*}^{(j)}\left((\|\varphi\|_{v}+{\epsilon}){v}\right)=\left(\|\varphi\|_{v}+{\epsilon}\right)^{q_j}g_{i^*}({v}).\]
\end{itemize}
Thus, by the observations above,
\[g_{i^*}^{(j)}(\Phi(t_*-\tau_j(t_*),\varphi))\le\left(\|\varphi\|_{v}+{\epsilon}\right)^{q_j}g_{i^*}^{(j)}({ v}).\]
Finally, with the help of the condition {\textup{\bf (S)}}, we see that
	\begin{align*}
		^{C\!}D^{{\alpha_{i^*}}}_{0^+}y_{i^*}(t_*)&=\frac{^{C\!}D^{\alpha_{i^*}}_{0^+}\Phi_{i^*}(t_*,\varphi)}{v_{i^*}} \\
		&=\frac{1}{v_{i^*}}f_{i^*}(\Phi(t_*,\varphi)) + \frac{1}{v_{i^*}}\sum_{j=1}^mg_{i^*}^{(j)}(\Phi(t_*-\tau_j(t_*),\varphi))\\
		&\le\frac{1}{v_{i^*}}\left(\|\varphi\|_{v}+{\epsilon}\right)^pf_{i^*}({v})+\frac{1}{v_{i^*}}\sum_{j=1}^m\left(\|\varphi\|_{v}+{\epsilon}\right)^{q_j} g_{i^*}^{(j)}({v})\\
		&\le\frac{1}{v_{i^*}}\left(\|\varphi\|_{v}+{\epsilon}\right)^pf_{i^*}({v})+\frac{1}{v_{i^*}}\left(\|\varphi\|_{v}+{\epsilon}\right)^p\sum_{j=1}^m g_{i^*}^{(j)}({v})\\
		&=\frac{1}{v_{i^*}}\left(\|\varphi\|_{v}+{\epsilon}\right)^p\left[f_{i^*}({v})+\sum_{j=1}^mg_{i^*}^{(j)}({v})\right]\\
  &< 0,
	\end{align*}
a contradiction with \eqref{contrabd}. This implies that $y_i(t)<0$ all $t\in[0,T_{\max}(\varphi))$ and for all $i=1,\dots,d$. Hence,
$$\frac{{\Phi}_i(t,\varphi)}{v_i} < \|\varphi\|_{v} +{\epsilon},\ \forall t\in[0,T_{\max}(\varphi)),\ i=1,\dots,d.$$
Let ${\epsilon}\to 0$, we obtain
$$\frac{{\Phi}_i(t,\varphi)}{v_i} \le \|\varphi\|_{v},\ \forall t\in[0,T_{\max}(\varphi)),\ i=1,\dots,d$$ or
\begin{equation}
	\label{est-Tmax} \|\Phi(t,\varphi)\|_{ v}\le \|\varphi\|_{v},\ \forall t\in[0,T_{\max}(\varphi)).
\end{equation}
However, in light of \eqref{est-Tmax} and the definition of the maximal interval of existence, it must be true that $T_{\max}(\varphi)=\infty$ because otherwise the solution $\Phi(\cdot,\varphi)$ can be extended over a larger interval.

\textbf{Case 2:} $q_j=p>1$ for all $j=1,\dots,m$. The arguments in the proof of {\bf Case 1} are still valid without the additional condition $\|\varphi\|_v<1$.

\textbf{Case 3:} $q_j=p=1$ for all $j=1,\dots,m$. By Proposition \ref{glcd1}, the functions $f$, $g$ are global Lipschitz continuous on $\R^d$. Therefore, from \cite[Theorem 2.2]{Tuan_Trinh_Siam}, for any $\varphi\succeq{0}$ on $[-r,0]$, $\varphi(0)\succ 0$, system \eqref{Eq main} has the unique global nonnegative solution on $[0,\infty)$. Now, repeating the arguments in the proof of {\bf Case 1}, it is easy to see that
\[\|\Phi(t,\varphi)\|_{v}\le \|\varphi\|_{v},\ \forall t\in[0,\infty).\]
The proof is complete.
\end{proof}
\begin{proposition}\label{Sys positive}
Consider system \eqref{Eq main}. Suppose that the assumption {\bf{\textup{(H1)}}}, {\bf{\textup{(H2)}}} and {\bf{\textup{(S)}}} are satisfied. 
\begin{itemize}
	\item [(i)] In addition, assume that {\bf{\textup{(H2)}}} is verified for some $q_j>p$. Let $v\succ 0$ as in {\bf{\textup{(S)}}}. Then, for any $\varphi\in C([-r,0];\R^d_{\geq 0})$ with $\|\varphi\|_v<1$, the solution $\Phi(\cdot,\varphi)$ exists globally and is non-negative on $[0,\infty)$. Moreover, $\|\Phi(t,\varphi)\|_v\leq \|\varphi\|_v$ for all $t\geq 0$.
	\item [(ii)] Assume that {\bf{\textup{(H2)}}} is true with $q_j=p$, $j=1,\dots,m$. Then, for any $\varphi\in C([-r,0];\R^d_{\geq 0})$, the solution $\Phi(\cdot,\varphi)$ exists globally on $[0,\infty)$ and $\Phi(t,\varphi)\succeq 0$ for all $t\geq 0$. Furthermore, we also obtain the estimate $\|\Phi(t,\varphi)\|_v\leq \|\varphi\|_v$ for all $t\geq 0$.
	\end{itemize}
\end{proposition}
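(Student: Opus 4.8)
The plan is to derive Proposition \ref{Sys positive} from Proposition \ref{boundsol} by adding the positivity assertion and then removing the hypothesis $\varphi(0)\succ0$ by approximation; I would proceed in three stages.

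\emph{Stage 1: the case $\varphi(0)\succ0$ with all orders in $(0,1)$.} Here Proposition \ref{boundsol} already gives global existence and $\|\Phi(t,\varphi)\|_v\le\|\varphi\|_v$, so only $\Phi(t,\varphi)\succeq0$ remains. Observe first that homogeneity of degree $\ge1$ forces $f(0)=0$ and $g^{(j)}(0)=0$. I would set $t_1:=\inf\{t>0:\Phi(t,\varphi)\not\succ0\}$; by continuity and $\varphi(0)\succ0$ it is positive, and if it were finite there would be an index $i_1$ with $\Phi_{i_1}(t_1,\varphi)=0$, $\Phi(t,\varphi)\succ0$ on $[0,t_1)$, and $\Phi(t_1,\varphi)\succeq0$. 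Applying Lemma \ref{compare-FDE} to $-\Phi_{i_1}(\cdot,\varphi)$ (whose values are $<0$ on $[0,t_1)$ and $0$ at $t_1$) would give $^{\!C}D^{\alpha_{i_1}}_{0^+}\Phi_{i_1}(t_1,\varphi)<0$. On the other hand $\Phi(s,\varphi)\succeq0$ for every $s\in[-r,t_1]$ (using $\varphi\succeq0$ on $[-r,0]$), so Proposition \ref{Pro_Cooperative} gives $f_{i_1}(\Phi(t_1,\varphi))\ge f_{i_1}(0)=0$ and the order-preservation in \textbf{(H2)} gives $g^{(j)}_{i_1}(\Phi(t_1-\tau_j(t_1),\varphi))\ge g^{(j)}_{i_1}(0)=0$; hence the right-hand side of the equation at $t_1$, which equals $^{\!C}D^{\alpha_{i_1}}_{0^+}\Phi_{i_1}(t_1,\varphi)$, is $\ge0$, a contradiction. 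Thus $t_1=\infty$.

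\emph{Stage 2: the case $\varphi(0)\succ0$, general orders.} The point I expect to be the main obstacle is that the contradiction of Stage 1 breaks down at an index with $\alpha_{i_1}=1$, where Lemma \ref{compare-FDE}(ii) only produces $^{\!C}D^{1}_{0^+}\Phi_{i_1}(t_1,\varphi)\le0$, which is consistent with $\ge0$. I would circumvent this by perturbing the orders: replace each $\alpha_i=1$ by $1-\frac{1}{k}$, keep the others, and let $\Phi^{(k)}$ be the corresponding solution. Stage 1 applies to $\Phi^{(k)}$, so $\Phi^{(k)}(\cdot,\varphi)\succeq0$, while $\|\Phi^{(k)}(t,\varphi)\|_v\le\|\varphi\|_v$ still holds (the proof of Proposition \ref{boundsol} is insensitive to the particular orders). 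Since all the $\Phi^{(k)}(\cdot,\varphi)$ stay in a fixed bounded set on which $f$ and the $g^{(j)}$ are Lipschitz (Propositions \ref{glcd1} and \ref{loclc}), continuous dependence of the solution on the fractional orders (a routine consequence of the integral formulation) gives $\Phi^{(k)}(\cdot,\varphi)\to\Phi(\cdot,\varphi)$ uniformly on compact intervals, hence $\Phi(t,\varphi)\succeq0$ for all $t\ge0$.

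\emph{Stage 3: general $\varphi\in C([-r,0];\R^d_{\ge0})$.} I would approximate $\varphi$ by $\varphi^{(k)}:=\varphi+\varepsilon_k v$ with $\varepsilon_k\downarrow0$ (chosen so that $\|\varphi\|_v+\varepsilon_k<1$ in part (i)); then $\varphi^{(k)}(0)\succ0$, $\|\varphi^{(k)}\|_v\le\|\varphi\|_v+\varepsilon_k$, and by Stages 1--2 each $\Phi(\cdot,\varphi^{(k)})$ is global, nonnegative, and $\|\cdot\|_v$-bounded by $\|\varphi\|_v+\varepsilon_k$. As these trajectories again lie in a common bounded set, the standard Gronwall estimate for fractional delay equations gives $\Phi(\cdot,\varphi^{(k)})\to\Phi(\cdot,\varphi)$ uniformly on compact time intervals, and letting $k\to\infty$ yields $\Phi(t,\varphi)\succeq0$ and $\|\Phi(t,\varphi)\|_v\le\|\varphi\|_v$ for all $t\ge0$; global existence then follows from this a priori bound exactly as at the end of the proof of Proposition \ref{boundsol}. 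Parts (i) and (ii) are handled simultaneously, the only difference being the bookkeeping of the constraint $\|\varphi\|_v<1$ needed in case (i). Besides the $\alpha_i=1$ borderline, the only other place requiring care is the explicit invocation of continuous dependence on the order and on the initial datum for this class of multi-order fractional delay systems.
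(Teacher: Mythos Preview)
Your proof takes a genuinely different route from the paper's. The paper never argues positivity directly for $\varphi(0)\succ0$; instead it perturbs both the equation and the initial datum, replacing the right-hand side by $f(x)+\sum_j g^{(j)}(\cdot)+\tfrac{1}{k}\mathbf{e}$ and taking $\varphi^k=\varphi+\tfrac{1}{k}\mathbf{e}$. A comparison of the same flavour as your Stage~1, but applied to the difference $\Phi^k-\Phi^n$ for $n>k$, shows that the family $\{\Phi^k\}$ is strictly decreasing; the pointwise limit is then identified with $\Phi(\cdot,\varphi)$ through the integral equation, which yields positivity and the bound at once. The additive term $\tfrac{1}{k}\mathbf{e}$ is what makes the contradiction work uniformly in $\alpha_{i}\in(0,1]$: at the first crossing time the computed Caputo derivative of the difference is $\ge\tfrac{1}{k}-\tfrac{1}{n}>0$, so the strict sign comes from the perturbation rather than from Lemma~\ref{compare-FDE}.

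Your Stage~1 is correct and arguably cleaner in the strictly fractional case. The substantive divergence is Stage~2: perturbing the \emph{orders} $\alpha_i$ and invoking continuous dependence on them is plausible but is not the ``routine consequence of the integral formulation'' you suggest. The kernel $(t-s)^{\alpha-1}/\Gamma(\alpha)$ converges to $1$ only in $L^1_{\mathrm{loc}}$ as $\alpha\to1^-$, and a complete argument for a multi-order delay system needs uniform-in-$\alpha$ control of the modulus of continuity of the solutions before one can pass to the limit; this is a genuine lemma, not a throwaway remark. The paper's additive perturbation of the vector field avoids this issue entirely and handles all $\alpha_i\in(0,1]$ in one stroke. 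Your Stage~3 is close in spirit to the paper's limiting step, though the paper relies on monotonicity of $\{\Phi^k\}$ rather than a Gronwall comparison; either device works once the uniform $\|\cdot\|_v$-bound is available.
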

\begin{proof} {\bf Case 1}: There exists some $q_j>p$. Take and fix the initial condition $\varphi\succeq {0}$ with $\|\varphi\|_v<1$. Choose $k$ large enough such that $\|\varphi\|_v+\frac{1}{k}<1$. Let $\Phi^k(\cdot,\varphi^k)$ be the unique solution of the initial value problem
\begin{equation} \label{Eq main_n}
	\begin{cases}
		^{\!C}D^{{\hat\alpha}}_{0^+}{x}(t)&={f}({x}(t))+\displaystyle\sum_{j=1}^m{g}^{(j)}({x}(t-\tau_j(t)))+\frac{\textbf{e}}{k},\ \forall t>0,\\
		{x}(s)&=\varphi^k(s),\ \forall s\in[-r,0],
	\end{cases}
\end{equation}
where $\varphi^k(s)=\varphi(s)+\displaystyle\frac{1}{k}\textbf{e},\ s\in[-r,0]$ and  $\textbf{e}:=(1,\dots,1)^{\rm T} \in \R^d$. It follows from 
Proposition \ref{boundsol} that $\Phi^k(t,\varphi^k)\succ 0$ and $\|\Phi^k(t,\varphi^k)\|_v\leq \|\varphi^k\|_v$ for all $t\geq 0$. Let $k,n\in \mathbb N$, $n>k$ and put $\eta(t):=\Phi^k(t,\varphi^k)-\Phi^n(t,\varphi^n),\ \forall t\in [0, \infty).$ Suppose that there exists a $t>0$ and an index $i=1,\dots,d$ with $\eta_{i}(t)=0$. Take
	\begin{align*}
		t_0:=\inf \{t>0:\exists i= \overline{1,d}\ \text{such that}\ \eta_{i}(t)=0\}.
	\end{align*}  
This implies $t_0>0$. Furthermore, there is an index $ i_0$ so that  
	\begin{align} \label{t_**}
	\eta_{i_0}(t_0)&=0,\quad \eta_{i}(t_0)\ge0,\ i\ne i_0,\\
	\quad \eta_{i_0}(t)&>0,\ \forall t\in[0,t_0). \notag
\end{align}
Since $\eta_{i_0}(t_0)=0$ and $\eta_{i_0}(t)>0,\ \forall t\in [0,t_0)$, by Lemma \ref{compare-FDE}, it deduces that  $$^{\!C}D^{\alpha_{i_0}}_{0^+}\eta_{i_0}(t_0)\leq  0.$$ 
On the other hand, from \eqref{t_**}, we see
	\begin{align*}
		\Phi^k_{i_0}(t_0,\varphi^k)&=\Phi^n_{i_0}(t_0,\varphi^n),\\
		\Phi^n_{i}(t,\varphi^n)&\le\Phi^k_{i}(t,\varphi^k),\ \forall i\ne i_0,\ \forall t\in[0,t_0],
	\end{align*}
	which together with Proposition \ref{Pro_Cooperative} and the fact that $f$ is cooperative implies $$f_{i_0}(\Phi^n(t_0,\varphi^n))\le f_{i_0}(\Phi^k(t_0,\varphi^k)).$$
	With the help of the assumption that $g^{(j)}$ is order-preserving, we obtain
	\begin{itemize}
		\item if $t_0-\tau_j(t_0)\ge0$, then $$ g_{i_0}^{(j)}(\Phi^n(t_0-\tau_j(t_0)),\varphi^n)\leq g_{i_0}^{(j)}(\Phi^k(t_0-\tau_j(t_0)),\varphi^k);$$ 
		\item  if $t_0-\tau_j(t_0)<0$, then $$ g_{i_0}^{(j)}(\Phi^n(t_0-\tau_j(t_0)),\varphi^n)=\varphi^n(t_0-\tau_j(t_0))\leq g_{i_0}^{(j)}(\Phi^k(t_0-\tau_j(t_0)),\varphi^k)=\varphi^k(t_0-\tau_j(t_0));$$
	\end{itemize}
	and thus
	$$ g^{(j)}_{i_0}(\Phi^n(t_0-\tau_j(t_0)),\varphi^n)\le g_{i_0}^{(j)}(\Phi^k(t_0-\tau_j(t_0)),\varphi^k).$$
	These lead to that 
	\begin{align*}
		^{\!C}D^{\alpha_{i_0}}_{0^+}\eta_{i_0}(t_0)&=	^{\!C}D^{\alpha_{i_0}}_{0^+}\Phi^k_{i_0}(t_0,\varphi^k)-	^{\!C}D^{\alpha_{i_0}}_{0^+}\Phi^n_{i_0}(t_0,\varphi^n) \\
		&=[f_{i_0}(\Phi^k(t_0),\varphi^k)+\sum_{j=1}^mg_{i_0}^{(j)}(\Phi^k(t_0-\tau_j(t_0)),\varphi^k)+\frac{1}{k}]\\
  &\hspace{1cm}-[f_{i_0}(\Phi^n(t_0),\varphi^n)+\sum_{j=1}^mg_{i_0}^{(j)}(\Phi^n(t_0-\tau_j(t_0)),\varphi^n)+\frac{1}{n}]\\
		&=f_{i_0}(\Phi^k(t_0),\varphi^k)-f_{i_0}(\Phi^n(t_0),\varphi^n)+\frac{1}{k}-\frac{1}{n}\\
  &\hspace{1cm}+\sum_{j=1}^m[g_{i_0}^{(j)}(\Phi^k(t_0-\tau_j(t_0)),\varphi^k)-g_{i_0}^{(j)}(\Phi^n(t_0-\tau_j(t_0)),\varphi^n)]\\
		&> 0,
	\end{align*}
	a contradiction.
Hence, the sequence $\left\{\Phi^k(\cdot,\varphi^k)\right\}$ (for $k$ large enough) is strictly decreasing on $[0,\infty)$. For each $t\ge0$, the limit $\displaystyle\lim_{k\to\infty}\Phi^k(t,\varphi^k)$ exists. Define
$$\Psi^*(t):=\displaystyle\lim_{k\to\infty}\Phi^k(t,\varphi^k).$$
Using the arguments as in \cite[Theorem 4.2]{Tuan_Thinh_Esaim}, then the sequence $\left\{\Phi^k(\cdot,\varphi^k)\right\}$
converges uniformly to $\Psi^*(\cdot)$ on $[0,T]$ for any $T>0$ and it is obvious to see that $\Psi^*(\cdot)$ is also continuous and nonnegative on this interval. Moreover, $\|\Psi^*(t)\|_v\leq \|\varphi\|_v$ for all $t\geq 0$. Now, based on the integral form of the solution
\begin{align*}
	\Phi_i^k(t,\varphi^k)&=\varphi_i(0)+\frac{1}{\Gamma(\alpha_i)}\int_0^t(t-s)^{\alpha_i-1}\big[f_i(\Phi^k(s,\varphi^k))+\sum_{j=1}^mg_i^{(j)}(\Phi^k(s-\tau_j(s),\varphi^k))\big]ds\\
 &\hspace{1cm}+\frac{1}{k}+\frac{t^{\alpha_i}}{k\Gamma(\alpha_i+1)}
\end{align*} 
for all $t\geq 0$ and letting  $k \rightarrow \infty$, then
\begin{align*}
	\Psi_i^*(t)=\varphi_i(0)+\frac{1}{\Gamma(\alpha_i)}\int_0^t(t-s)^{\alpha_i-1}\big[f_i(\Psi^*(s))+\sum_{j=1}^mg_i^{(j)}(\Psi^*(s-\tau_j(s)))\big]ds
\end{align*} 
for all $t\geq 0,\;i=1,\dots,d$.
This together with the fact system \eqref{Eq main} has a unique solution which exists globally on $[0,\infty)$ shows that  $\Psi^*(t) = \Phi(t,\varphi),\ \forall t\geq 0$. In particular, $\Phi(\cdot,\varphi)$ is non-negative and $\|\Phi(t,\varphi)\|_v\leq \|\varphi\|_v$ for all $t\geq 0$.

{\bf Case 2}: $q_j=p$ for all $j=1,\dots,m$. Using the same arguments mentioned above and note that the condition $\|\varphi\|_v<1$ is not necessary.
\end{proof}
\subsection{Mittag-Leffler stability of the systems}
This subsection presents our main contribution concerning the asymptotic properties and convergence rate of the solution of the system \eqref{Eq main} to the origin. Before stating the main result, we introduce the definitions of Mittag-Leffler stability that were previously established in \cite{Tuan_Trinh_Siam,Tuan_Cong_JMAA}.
\begin{definition} \label{GMLS-def}
	The trivial solution of \eqref{Eq main} is called globally Mittag-Leffler stable if there exists a positive parameter $\beta$ such that for any $\varphi\in C([-r,0];\R^d)$, the solution $\Phi(\cdot,\varphi)$ exists on the interval
	$[0,\infty)$ and satisfies
	\[\|\Phi(t,\varphi)\|\le \nu E_{\beta}(-ct^\beta),\ \forall t\ge0,\]
$c, \nu>0$ are parameters depending on $\varphi$, $\alpha$, $f$ and $g^{(j)}$, $j=1,\dots,m$.	
\end{definition}
\begin{definition} \label{LMLS-def}
	The trivial solution of \eqref{Eq main} is called locally Mittag-Leffler stable if for any $\varphi\in C([-r,0];\R^d)$ with $\|\varphi\|$ small enough, the solution $\Phi(\cdot,\varphi)$ exists on the interval
	$[0,\infty)$ and satisfies
	\[\|\Phi(t,\varphi)\|\le \nu E_{\beta}(-ct^\beta),\ \forall t\ge0,\]
	where $\beta$ is positive constant independent of the initial condition $\varphi$, and $c, m>0$ are parameters depending on $\varphi$, $\alpha$, $f$ and $g^{(j)}$, $j=1,\dots,m$. 
\end{definition}
	\begin{theorem} \label{LMLS}
	Suppose that assumptions {\bf{\textup{(H1)}}}, {\bf{\textup{(H2)}}} and  {\bf{\textup{(S)}}} are true. Then, the trivial solution of \eqref{Eq main} is locally Mittag-Leffler stable. 
	\end{theorem}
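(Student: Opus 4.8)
The plan is to run a first–crossing comparison argument in the spirit of the proof of Proposition~\ref{boundsol}, but with a \emph{decaying} profile in place of a constant one, working throughout in the weighted norm $\|\cdot\|_v$. It suffices to prove the estimate for $\varphi\in C([-r,0];\R^d_{\ge0})$ with $\|\varphi\|_v<1$: for $\|\varphi\|$ small this holds up to norm equivalence, and a sign--changing $\varphi$ is reduced to this case by dominating $\Phi(\cdot,\varphi)$ componentwise by the nonnegative solution issued from $|\varphi|$. For such $\varphi$, Proposition~\ref{Sys positive} already provides global existence, nonnegativity, and $\|\Phi(t,\varphi)\|_v\le M:=\|\varphi\|_v$ for all $t\ge0$. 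Since each $g^{(j)}$ is order--preserving on $\R^d_{\ge0}$ and $g^{(j)}(0)=0$ (homogeneity of positive degree), one has $g^{(j)}(v)\succeq0$, and by {\bf(S)} we may set $-2\delta_0:=\max_{1\le i\le d}\bigl(f_i(v)+\sum_{j=1}^m g^{(j)}_i(v)\bigr)<0$.

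Put $\beta:=\min_{1\le i\le d}\alpha_i$ and, for parameters $\lambda>0,\ \epsilon>0$ to be fixed, let $c(t):=(M+\epsilon)E_\beta(-\lambda t^\beta)$. I claim $\Phi_i(t,\varphi)<c(t)v_i$ for all $t\ge0$, all $i$, provided $\epsilon$ is small, $M+\epsilon\le E_\beta(-\lambda r^\beta)$, and $\lambda$ is chosen suitably. If not, let $t_*>0$ be the first time some $z_i:=\Phi_i(\cdot,\varphi)-c(\cdot)v_i$ vanishes, with witnessing index $i^*$; then $z_i(t_*)\le0$ for all $i$, $z_{i^*}<0$ on $[0,t_*)$, so a crossing lemma of the type of Lemma~\ref{compare-FDE}, applied at the interior point $t_*$, gives $ {}^{C}D^{\alpha_{i^*}}_{0^+}\Phi_{i^*}(t_*,\varphi)\ge v_{i^*}\,{}^{C}D^{\alpha_{i^*}}_{0^+}c(t_*)$. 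At $t_*$ we have $\Phi(t_*,\varphi)\preceq c(t_*)v$ with equality in coordinate $i^*$, so Proposition~\ref{Pro_Cooperative} and homogeneity of $f$ yield $f_{i^*}(\Phi(t_*,\varphi))\le c(t_*)^p f_{i^*}(v)$. For the lagged terms one checks $\Phi(t_*-\tau_j(t_*),\varphi)\preceq\kappa\,c(t_*)v$ in both cases $t_*-\tau_j(t_*)\ge0$ and $<0$ (in the latter necessarily $t_*<r$), where $\kappa:=1/E_\beta(-\lambda r^\beta)\ge1$, using Lemma~\ref{Lemma-ML} and $\tau_j\le r$; hence, by order--preservingness and homogeneity of $g^{(j)}$ together with $\kappa c(t_*)\le\kappa(M+\epsilon)\le1$ and $q_j\ge p$,
\[
g^{(j)}_{i^*}\bigl(\Phi(t_*-\tau_j(t_*),\varphi)\bigr)\le(\kappa c(t_*))^{q_j}g^{(j)}_{i^*}(v)\le\kappa^{q_{\max}}c(t_*)^p\,g^{(j)}_{i^*}(v),\qquad q_{\max}:=\max_j q_j .
\]
Summing, $v_{i^*}\,{}^{C}D^{\alpha_{i^*}}_{0^+}c(t_*)\le c(t_*)^p\bigl[f_{i^*}(v)+\kappa^{q_{\max}}\sum_j g^{(j)}_{i^*}(v)\bigr]$; since $\kappa\to1$ as $\lambda\to0$ and $g^{(j)}(v)\succeq0$, for $\lambda$ small the bracket is $\le-\delta_0$ uniformly in $i^*$, so $v_{i^*}\,{}^{C}D^{\alpha_{i^*}}_{0^+}c(t_*)\le-\delta_0\,c(t_*)^p<0$.

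It remains to see that this last inequality is impossible. When $\alpha_{i^*}=\beta$ one has ${}^{C}D^{\beta}_{0^+}c(t)=-\lambda c(t)$, so it reads $\lambda v_{i^*}\le\delta_0\,c(t_*)^{p-1}$, which fails once $\lambda<\delta_0/\max_i v_i$ in the case $p=1$, giving the contradiction. When $\alpha_{i^*}>\beta$ the profile $c$ is completely monotone and ${}^{C}D^{\alpha_{i^*}}_{0^+}c\le0$, and one rules out the inequality by the precise large--time asymptotics of $E_\beta$, which force ${}^{C}D^{\alpha_{i^*}}_{0^+}c(t)$ to decay strictly faster than $c(t)^p$ for the chosen $\lambda$. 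Once the claim $\Phi_i(t,\varphi)<c(t)v_i$ holds, letting $\epsilon\downarrow0$ gives $\|\Phi(t,\varphi)\|_v\le M\,E_\beta(-\lambda t^\beta)$, whence $\|\Phi(t,\varphi)\|\le(\max_i v_i)\,\|\varphi\|_v\,E_\beta(-\lambda t^\beta)$, which is the asserted local Mittag--Leffler estimate with exponent $\beta=\min_i\alpha_i$; the smallness of $\|\varphi\|$ is used only to put $\|\varphi\|_v$ below the fixed threshold $E_\beta(-\lambda r^\beta)$.

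The main obstacle is the interplay of the mixed fractional orders with the degree $p$. For $p=1$ everything goes through with the Mittag--Leffler profile above. For $p>1$ that profile cannot work, since the sub--linear homogeneity near the origin dictates a slower, polynomial decay rate $t^{-\beta/(p-1)}$: one must instead take a profile of the form $(M+\epsilon)\bigl(E_\beta(-\lambda t^\beta)\bigr)^{1/(p-1)}$ and establish a uniform one--sided lower bound on ${}^{C}D^{\alpha_i}_{0^+}$ of such a function for every $i$ (matching the power $c^p$ on the right), which is the technical heart of the proof; the remaining bookkeeping — the split on the sign of $t_*-\tau_j(t_*)$ around Lemma~\ref{Lemma-ML} and the norm equivalence — is routine.
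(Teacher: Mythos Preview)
Your proposal has a genuine gap: the case $p>1$ is not proved. You explicitly acknowledge that one must switch to a profile of the form $(E_\beta(-\lambda t^\beta))^{1/(p-1)}$ and then ``establish a uniform one--sided lower bound on ${}^CD^{\alpha_i}_{0^+}$ of such a function,'' calling this ``the technical heart of the proof'' --- but you do not carry it out. Since Caputo derivatives of powers of Mittag--Leffler functions have no tractable closed form, this is not a routine step one can wave away; the argument as written covers only $p=1$. Even in that case your treatment of the mixed--order subcase $\alpha_{i^*}>\beta$ is incomplete: you appeal to ``precise large--time asymptotics'' to obtain the contradiction, yet you give no a~priori lower bound on $t_*$. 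The explicit formula ${}^CD^{\alpha_{i^*}}_{0^+}E_\beta(-\lambda t^\beta)=-\lambda\,t^{\beta-\alpha_{i^*}}E_{\beta,1+\beta-\alpha_{i^*}}(-\lambda t^\beta)$ shows that for small $t_*$ the factor $t_*^{\beta-\alpha_{i^*}}$ blows up, so the inequality you wish to contradict can in fact hold there.

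The paper sidesteps both difficulties by two coupled choices you are missing. First, it takes $\beta:=\min_i\alpha_i/p$ (not $\min_i\alpha_i$) and uses the \emph{plain} comparison profile $\nu_\varepsilon E_\beta(-ct^\beta)$; with this $\beta$ the ratio $t^{\beta-\alpha_i}E_{\beta,1+\beta-\alpha_i}(-ct^\beta)\big/E_\beta(-ct^\beta)^p$ is bounded on $[1,\infty)$, so a single small $c$ (chosen via an inequality that also absorbs the factor $\nu_\varepsilon^{\,1-p}$ arising from homogeneity of degree $p$) handles every order $\alpha_i$ and every $p\ge1$ at once --- no power of the Mittag--Leffler function is ever needed. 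Second, it sets $\nu_\varepsilon:=(\|\varphi\|_v+\epsilon)/E_\beta(-c)$; combined with the a~priori bound $\|\Phi(t,\varphi)\|_v\le\|\varphi\|_v$ from Proposition~\ref{Sys positive}, this forces $z_i(t)<0$ on the whole interval $[0,1]$, so any first crossing must occur at some $t_*>1$, which is exactly the regime where the ratio above is controlled.
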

	\begin{proof}
Let $v\succ 0$ be a vector satisfying the assumption {\textup{\bf(S)}}. Define $\beta:=\displaystyle\min_{1\leq i\leq d}\alpha_i/p$. Consider the initial condition 	$\varphi\in C([-r,0];\R^d_{\geq 0})$ with $\|\varphi\|_v<1$. Without loss of generality, we will only focus on the case $\varphi\neq 0$. By virtue of the assumption {\bf{\textup{(H2)}}}, for all $i=1,\dots,d$, we can find a constant $c\in (0,1)$ verifying the following inequality
\begin{equation}\label{addeq1}
\frac{f_i({v})}{v_i}+\sum_{j=1}^m\frac{1}{E_\beta(-c r^\beta)^{q_j}}\frac{g_i^{(j)}({v})}{v_i}+\Big(\displaystyle\frac{\|\varphi\|_{v}}{E_\beta(-c)}\Big)^{1-p}c\sup_{t\ge1}I_i(t)\leq 0,
\end{equation}
where $I_i(t):=\frac{t^{\beta-\alpha_i}E_{\beta,\beta+1-\alpha_i}(-c t^\beta)}{E_\beta(-c t^\beta)}$. Take a constant $\epsilon>0$ with $\|\varphi\|_{v}+\epsilon\le 1$ and denote $\nu_\varepsilon:=\displaystyle\frac{\|\varphi\|_{v}+\epsilon}{E_\beta(-c)}$.
We will prove that
$$0\le\Phi_i(t,\varphi)< \nu_\varepsilon E_{\beta}(-ct^{\beta}), \; \forall t\ge0,\;i=1,\dots, d.$$
To do this, we first set \[z_i(t):=\frac{\Phi_i(t,\varphi)}{v_i}-\nu_\varepsilon E_{\beta}(-ct^{\beta}),\ t\ge0,\ i=1,\dots,d.\]
From Proposition \ref{boundsol} and the proof of Proposition \ref{Sys positive}, we see that $\|\Phi(t,\varphi)\|_{v}\le \|\varphi\|_{v},\ \forall t\ge 0$. Hence, $z_i(t)<0$ for all $t\in[0,1]$, and $i=1,\dots,d.$ Thus, if the assertion that ${z}(t)\prec {0},\ \forall t\ge0$ is false, there exist $t_*>1$ and $i_*$ so that
\begin{equation}
	\label{t_*-Theorem-main} \begin{cases}
		z_{i_*}(t_*)&=0\ \text{and}\ z_i(t_*)\le0,\ \forall i\ne i_*\\
		z_{i_*}(t)&<0,\ \forall t\in[0,t_*).
		\end{cases}
\end{equation}
This means that
\begin{equation}\label{rv_est1} 
\begin{cases}
		\Phi_{i_*}(t_*,\varphi)&=\nu_\varepsilon E_{\beta}(-ct^{\beta})v_{i_*}\ \text{and}\ \Phi_i(t_*,\varphi)\le m_\varepsilon E_{\beta}(-ct^{\beta})v_{i},\ \forall i\ne i_*\\
		\Phi_{i_*}(t_*,\varphi)&<\nu_\varepsilon E_{\beta}(-ct^{\beta})v_{i_*},\ \forall t\in[0,t_*).
	\end{cases}
\end{equation}
Due to the fact that $f$ is cooperative (Proposition \ref{Pro_Cooperative}) and that $f$ is homogeneous of degree $p$, it deduces from \eqref{rv_est1} that
\[f_{i_*}(\Phi(t_*,\varphi))\le f_{i_*}(\nu_\varepsilon E_{\beta}(-ct^{\beta}){v})=\left(\nu_\varepsilon E_{\beta}(-ct_*^{\beta})\right)^pf_{i_*}({v}).\]
Notice that, by \eqref{rv_est1}, the assumption {\bf(H1)} ($g^j(\cdot)$, $j=1,\cdots,m$, is homogeneous of degree $q_j$ and is order-preserving on $\R^d_{\geq 0}$), Lemma \ref{Lemma-ML}, and the fact that $E_\beta(-ct^\beta)$ is strictly deceasing on $[0,\infty)$, we obtain the estimates below.
\begin{itemize}
	\item If $t_*-\tau_j(t_*)\in[0,t_*]$, then
	\begin{align}
		g_{i_*}^{(j)}(\Phi(t_*-\tau_j(t_*),\varphi))&\le g_{i_*}^{(j)}(\nu_\varepsilon E_{\beta}(-c(t_*-\tau_j(t_*))^{\beta}){ v})\notag\\
		&=(\nu_\varepsilon E_{\beta}(-c(t_*-\tau_j(t_*))^{\beta}))^{q_j}g_{i_*}^{(j)}({ v})\notag\\
		&\le \left(\frac{\nu_\varepsilon E_{\beta}(-ct_*^\beta)}{E_{\beta}(-c\tau_j(t_*)^\beta)}\right)^{q_j}g_{i_*}^{(j)}({ v})\notag\\
		&\le \frac{\left(\nu_\varepsilon E_{\beta}(-ct_*^\beta)\right)^p}{E_{\beta}(-cr^\beta)^{q_j}}g_{i_*}^{(j)}({ v}).\label{addeq2}		
	\end{align}
	\item If $t_*-\tau_j(t_*)\in[-r,0]$, then
	\begin{align}\label{addeq3}
		g_{i_*}^{(j)}(\Phi(t_*-\tau_j(t_*),\varphi))\le g_{i_*}^{(j)}(\nu_\varepsilon {v})=\nu_\varepsilon ^{q_j}g_{i_*}^{(j)}({v})
		&\le \frac{\left(\nu_\varepsilon E_{\beta}(-ct_*^\beta)\right)^{q_j}}{E_{\beta}(-cr^\beta)^{q_j}}g_{i_*}^{(j)}({v})\notag \\
		&\le\frac{\left(\nu_\varepsilon E_{\beta}(-ct_*^\beta)\right)^p}{E_{\beta}(-cr^\beta)^{q_j}}g_{i_*}^{(j)}({v}).		
	\end{align}
\end{itemize}
Combining \eqref{addeq2}--\eqref{addeq3}, this deduces
\[g_{i_*}^{(j)}(\Phi(t_*-\tau_j(t_*),\varphi))\le\frac{\left(\nu_\varepsilon 
	E_{\beta}(-ct_*^\beta)\right)^p}{E_{\beta}(-cr^\beta)^{q_j}}g_{i_*}^{(j)}({ v}).\]
Moreover, using a direct computation, it is obvious to see that 
\[^{\!C}D^{{\alpha_{i_*}}}_{0^+}\left(E_{\beta}(-ct_*^\beta)\right)=-ct_*^{\beta-\alpha_{i_*}}E_{\beta,1+\beta-\alpha_{i_*}}(-ct_*^\beta).\]
Thus,
\begin{align*}
	^{\!C}D^{{\alpha_{i_*}}}_{0^+}&z_{i_*}(t_*)=\frac{1}{v_{i_*}}{ ^{\!C}D^{{\alpha_{i_*}}}_{0^+}}\Phi_{i_*}(t_*,\varphi)-^{\!C}D^{{\alpha_{i_*}}}_{0^+}\left(\nu_\varepsilon E_{\beta}(-ct_*^\beta)\right)\\
	&=\frac{1}{v_{i_*}}\left(f_{i_*}(\Phi(t_*,\varphi))+\sum_{j=1}^mg_{i_*}^{(j)}(\Phi(t_*-\tau_j(t_*),\varphi))\right)+\nu_\varepsilon ct_*^{\beta-\alpha_{i_*}}E_{\beta,1+\beta-\alpha_{i_*}}(-ct_*^\beta)\\
	&\le\frac{1}{v_{i_*}}\left[\left(\nu_\varepsilon E_{\beta}(-ct_*^{\beta})\right)^pf_{i_*}({v})+\sum_{j=1}^m\frac{E_{\beta}(-ct_*^\beta)^p}{E_{\beta}(-cr^\beta)^{q_j}}g_{i_*}^{(j)}({v})\right]+ \nu_\varepsilon ct_*^{\beta-\alpha}E_{\beta,1+\beta-\alpha_{i_*}}(-ct_*^\beta)\\
	&=\left(\nu_\varepsilon E_{\beta}(-ct_*^{\beta})\right)^p\left[\frac{f_{i_*}({v})}{v_{i_*}}+\sum_{j=1}^m\frac{g_{i_*}^{(j)}({v})}{E_{\beta}(-cr^\beta)^{q_j}v_{i_*}}+\nu_\varepsilon ^{1-p}c\frac{t_*^{\beta-\alpha_{i_*}}E_{\beta,1+\beta-\alpha_{i_*}}(-ct_*^\beta)}{E_{\beta}(-ct_*^{\beta})^p}\right]\\
	&\le \left(\nu_\varepsilon E_{\beta}(-ct_*^{\beta})\right)^p\left[\frac{f_{i_*}({v})}{v_{i_*}}+\sum_{j=1}^m\frac{g_{i_*}^{(j)}({v})}{E_{\beta}(-cr^\beta)^{q_j}v_{i_*}}+\nu_\varepsilon ^{1-p}c\sup_{t\ge1}\frac{t_*^{\beta-\alpha_{i_*}}E_{\beta,1+\beta-\alpha}(-ct_*^\beta)}{E_{\beta}(-ct_*^{\beta})^p}\right]\\&< 0,
\end{align*}
where the last inequality is derived from \eqref{addeq1}.
However, from \eqref{t_*-Theorem-main} and Lemma \ref{compare-FDE}, this implies that
$^{\!C}D^{{\alpha_{i_*}}}_{0^+}z_{i_*}(t_*)\geq 0$, a contracdiction. From this fact, we obtain that ${z}(t)\prec {0},\ \forall t\ge0$, and thus
\[\frac{\Phi_i(t,\varphi)}{v_i}<\nu_\varepsilon E_{\beta}(-ct^{\beta}),\ \forall t\ge0,\ \forall i=1,\dots,d.\] 
Let $\varepsilon\to 0$, then
\[
\frac{\Phi_i(t,\varphi)}{v_i}\leq \nu E_{\beta}(-ct^{\beta}),\ \forall t\ge0,\ \forall i=1,\dots,d,
\]
here $\nu:=\displaystyle\frac{\|\varphi\|_v}{E_\beta(-c)}$. The proof is complete. 
\end{proof}
\begin{remark}
  Theorem \ref{LMLS} provides a criterion to test the stability and the convergence rate of non-trivial solutions of multi-order fractional cooperative delay systems to the origin. Depending on the situation when the degrees of homogeneity of vector fields are equal or different, we get the global or local stability. To our knowledge, this result has not previously appeared in the literature.  
\end{remark}
\begin{remark}
By \cite[Lemma 4.25, p. 86]{Gorenflo_Book}, the functions $E_{\beta,\beta+1-\alpha_i}(-ct^\beta)$, $i=1,\dots,d$, $E_\beta(-ct^\beta)$  are strictly decreasing on $[0,\infty)$. Hence, for $t\geq 1$, we see that
\[
0\leq \frac{t^{\beta-\alpha_i}E_{\beta,\beta+1-\alpha_i}(-ct^\beta)}{E_\beta(-ct^\beta)}\leq \frac{E_{\beta,\beta+1-\alpha_i}(-ct^\beta)}{E_\beta(-ct^\beta)},\;\forall i=1,\dots,d.
\]
Furthermore, from \cite[Estimate (4.7.5), p. 75]{Gorenflo_Book}, we have
\[
\lim_{t\to\infty}\frac{E_{\beta,\beta+1-\alpha_i}(-ct^\beta)}{E_\beta(-ct^\beta)}=\frac{\Gamma(1-\beta)}{\Gamma(1-\alpha_i)},\;i=1,\dots,d.
\]
Thus, the observations above lead to that $\displaystyle\sup_{t\geq 1}\frac{t^{\beta-\alpha_i}E_{\beta,\beta+1-\alpha_i}(-ct^\beta)}{E_\beta(-ct^\beta)}$ is finite.
\end{remark}
\begin{remark}\label{rmadd2}
If the assumption {\bf (H2)} is true for $q_j=p$ for all $j=1,\dots,m$, then the proof of Theorem \ref{LMLS} holds without requiring the initial condition $\varphi(\cdot)$ to be small. Hence, in this case, the trivial solution is globally Mittag-Leffler stable.	
\end{remark}
\begin{remark}
	Consider system \eqref{Eq main} when $\alpha_1=\dots=\alpha_d=\alpha_0\in (0,1)$. Then, from the proof of Theorem \ref{LMLS} and Remark \ref{rmadd2}, the trivial solution is locally Mittag-Leffler stable or globally Mittag-Leffler stable and the optimal convergence rate of the solutions to the origin as $t^{-\alpha_0/p}$.
	\end{remark}
\begin{remark}
	Suppose that the assumption {\bf (H2)} is true for $p=q_j=1$ for all $j=1,\dots,m$ and  $\alpha_1=\dots=\alpha_d=\alpha_0\in (0,1)$. Then, the trivial solution of system \eqref{Eq main} is globally Mittag-Leffler stable. In particular, let $v\succ 0$ satisfying the assumption {\bf (S)}, based on the arguments as in the proof of Theorem \ref{LMLS}, for any initial condition $\varphi\in C([-r,0];\R^d_{\geq 0})$, the following optimal estimates hold
	\[
	\Phi_i(t,\varphi)\le \|\varphi\|_v E_{\alpha}(-\eta t^{\alpha_0}),\ \forall t\ge0,\;i=1,\dots,d,
	\]
	where $\eta>0$ is some constant such that
	\[
	f_i(v)+\sum_{j=1}^mg_i^{(j)}(v)+\eta\leq 0,\quad \forall i=1,\dots,d.
	\]
\end{remark}
\begin{remark}
The approach as in the proof of Theorem \ref{LMLS} still holds when $\beta=1$ or $\alpha_i=1$ for all $i=1,\dots,d$.   
\end{remark}


\section{Numerical examples and discussion} 
We present two numerical examples to illustrate the proposed theoretical result.
\begin{example}
	Consider the system
	\begin{equation} \label{Eqmain-2}
		\begin{cases}
		^{\!C}D^{\hat{\alpha}}_{0^+}w(t)&=f(w(t))+g(w(t-\tau(t)),\ \forall t> 0,\\
			w(s)&=\varphi(s),\ s \in [-r,0],
		\end{cases}
	\end{equation}
here $\hat{\alpha}=(0.71,0.61),$ the delay $\tau(t)=\displaystyle\frac{2+\sin t}{3}$ for $t\geq 0$, $r=1$, and
\begin{align*}
     f(w_1,w_2)=\left(\begin{array}{cc}
	-4{w_1}+3w_2 \\ w_1-3{w_2} \end{array}\right),\ g(w_1,w_2)=\left(\begin{array}{cc}{w_1^2}+3\sqrt{w_1^3w_2} \\ {w_1w_2}+2{w_2^2} \end{array}\right).
\end{align*}
It is obvious that $f(\cdot)$ is continuously differentiable on $\R^2\backslash \{0\}$ and
\[
		Df(w_1,w_2)=\begin{pmatrix}-4&3\\ 1&-2 \end{pmatrix}
\]
is a Metzler matrix. Hence, this function is cooperative on $\R^2_{\ge0}$. In addition, $g(\cdot)$ is continuously differentiable on $\R^2\backslash \{0\}$ and is order-preserving on $\R^2_{\ge0}$. On the other hand, $f(\cdot)$ is homogeneous of degree $p=1$ and $g(\cdot)$ is homogeneous of degree $q=2$. These observations show that the assumptions {\textup{\bf{(H1)}}} and {\textup{\bf{(H2)}}} are verified. Now, choosing $v=(0.3,0.2)^{\rm T}$, then the assumption {\textup{\bf{(S)}}} holds because
\[f(v)+g(v)= \left(\begin{array}{cc}
	-0.29 \\ -0.16 \end{array}\right)\prec 0.\]
So, according to Theorem \ref{LMLS}, the trivial solution of \eqref{Eqmain-2} is locally Mittag-Leffler stable. Take the initial condition $\varphi(s)\thickapprox\left(\begin{array}{cc}
	0.2 \\ 0.15 \end{array}\right)$ on the interval $[-1,0]$, we see that $\|\varphi\|_v<1$.  The asymptotic behavior of the solution $\Phi(\cdot,\varphi)$ is depicted in Figure 1.  
 
 When the degree of homogeneity of function $g$ is larger than the one of function $f$, in general, the trivial solution of system \eqref{Eqmain-2} is only locally Mittag-Leffler stable. Indeed, take $\varphi=(1.2,0.4)^{\rm T}$ (it is easy to check that $\|\varphi\|_v>1$), by a numerical simulation, we can see that the solution $\Phi(\cdot,\varphi)$ of the system does not converge to the origin (see Figure 2). 
\begin{figure}
		\begin{center}
			\includegraphics[scale=.8]{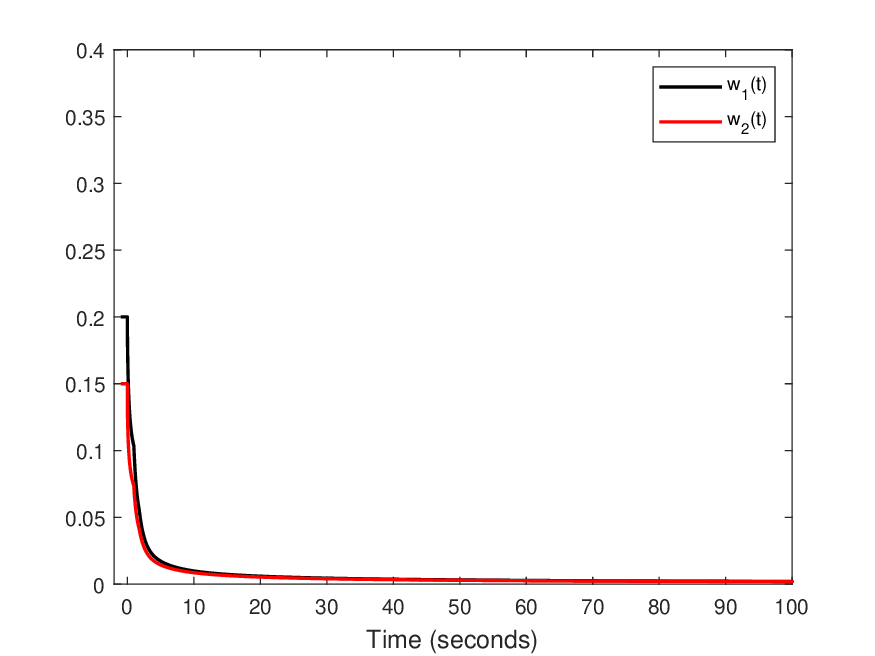}
		\end{center}
		\begin{center}
			\caption{The solution to system \eqref{Eqmain-2} with $\varphi(s)=\left(\begin{array}{cc}
	0.2\\0.15 \end{array}\right)$ on the interval $[-1,0]$.}
		\end{center}
	\end{figure}
 \begin{figure}
		\begin{center}
			\includegraphics[scale=.8]{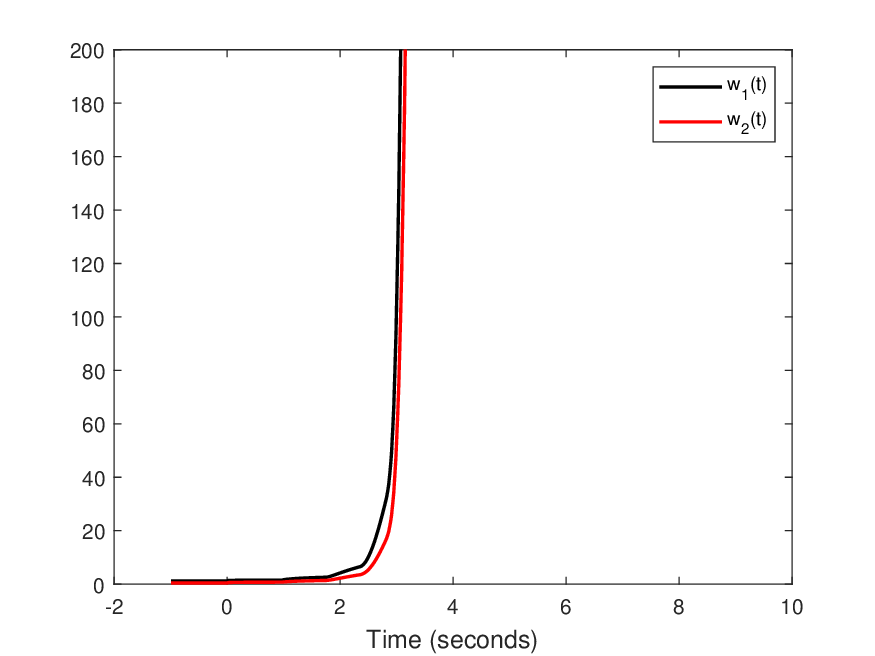}
		\end{center}
		\begin{center}
			\caption{The solution to the system \eqref{Eqmain-2} with $\varphi(s)=\left(\begin{array}{cc}
	1.2 \\ 0.4 \end{array}\right)$ on the interval $[-1,0]$.}
		\end{center}
	\end{figure}
\end{example}
\begin{example}
	Consider the system
	\begin{equation} \label{Eqmain-1}
		\begin{cases}
		^{\!C}D^{\hat{\alpha}}_{0^+}w(t)&=f(w(t))+g(w(t-\tau(t)),\ \forall t> 0,\\
			w(s)&=\varphi(s),\ s \in [-r,0].
		\end{cases}
	\end{equation}
Here, we choose $\hat{\alpha}=(0.95,\ 0.7),$ $\tau(t)=\displaystyle\frac{1}{2}+\displaystyle\frac{1}{2+t^2}$ for $t\geq 0$, $r=1$, and
\begin{align*}
     f(w_1,w_2)=\left(\begin{array}{cc}
	-8{w_1^2}+w_2^2 \\ 2w_1^2-9{w_2}^2 \end{array}\right),\ g(w_1,w_2)=\left(\begin{array}{cc} 3w_1w_2+w_2^2 \\ {(w_1+2w_2)}\sqrt{w_1^2+7w_2^2} \end{array}\right).
\end{align*}
Function $f(\cdot)$ is continuously differentiable on $\R^2\backslash \{0\}$ and
\[
		Df(w_1,w_2)=\begin{pmatrix}-16w_1&2w_2\\ 4w_1&-18w_2 \end{pmatrix}.
\]
Hence, it is cooperative on $\R^2_{\ge0}$. Function $g(\cdot)$ is continuously differentiable on $\R^2\backslash \{0\}$ and is order-preserving on $\R^2_{\ge0}$. Furthermore, $f(\cdot),\ g(\cdot)$ are homogeneous of degree $2$. Thus, the assumptions {\textup{\bf{(H1)}}} and {\textup{\bf{(H2)}}} are satisfied. Choosing $v=(1,\ 1)^{\rm T}$, then 
\[f(v)+g(v)= \left(\begin{array}{cc}
	-3 \\ -1 \end{array}\right)\prec 0.\]
 This implies that the assumption {\textup{\bf{(S)}}} is true. Then, by Theorem \ref{LMLS}, the trivial solution of \eqref{Eqmain-2} is globally Mittag-Leffler stable. Figures 3 (the initial condition $\|\varphi\|_v<1$) and Figure 4 (the initial condition $\|\varphi\|_v>1$) illustrate the fact that every solution of system \eqref{Eqmain-1} is attracted to the origin regardless of whether its initial condition is small or large.
\begin{figure}
		\begin{center}
			\includegraphics[scale=.8]{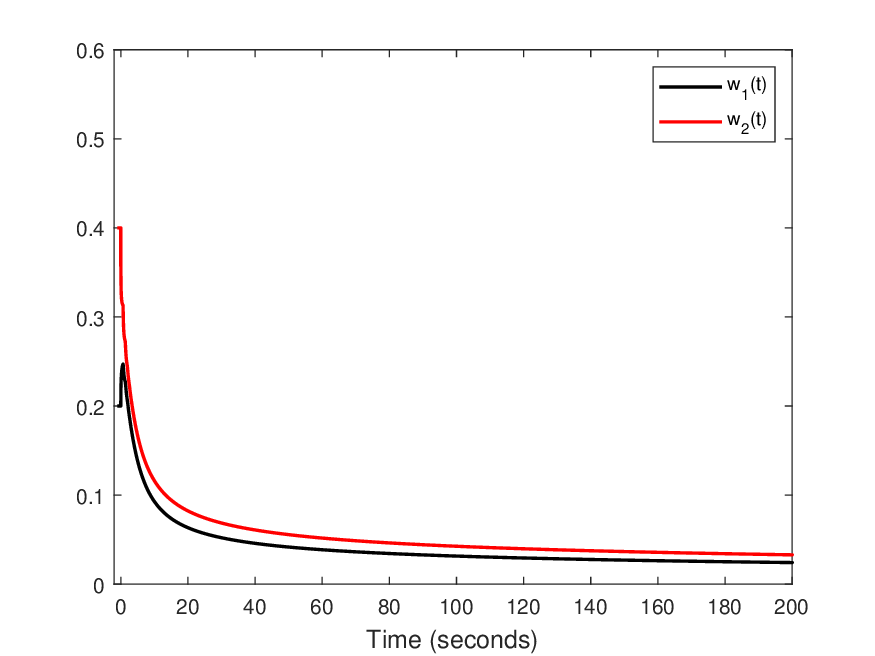}
		\end{center}
		\begin{center}
			\caption{The solution to system \eqref{Eqmain-1} with $\varphi(s)=\left(\begin{array}{cc}
	0.2 \\ 0.4 \end{array}\right)$ on the interval $[-1,0]$.}
		\end{center}
	\end{figure}
\begin{figure}
		\begin{center}
			\includegraphics[scale=.8]{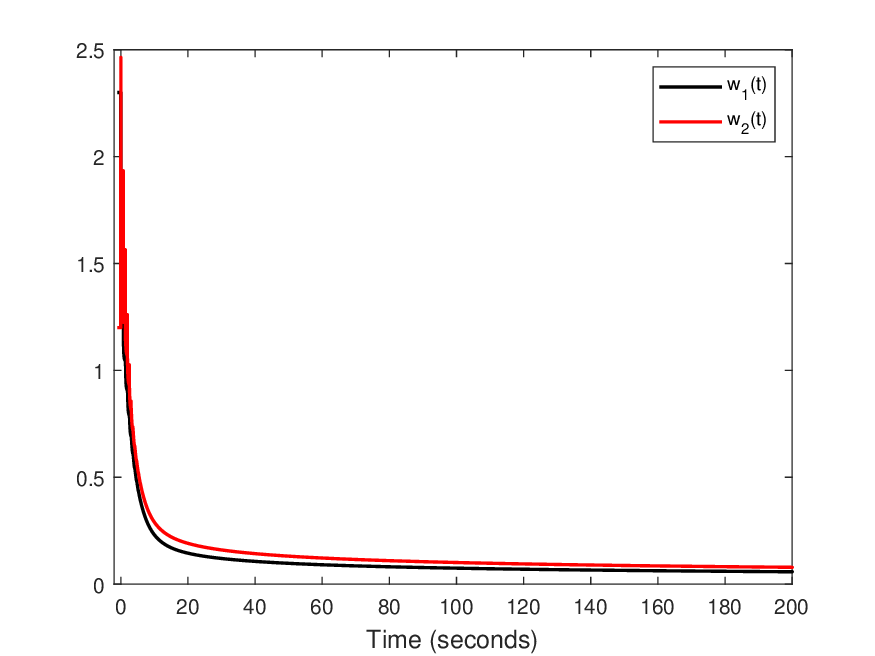}
		\end{center}
		\begin{center}
			\caption{The solution to system \eqref{Eqmain-1} with $\varphi(s)=\left(\begin{array}{cc}
	2.3 \\ 0.2 \end{array}\right)$ on the interval $[-1,0]$.}
		\end{center}
	\end{figure}
\end{example}
\section*{Acknowledgments}
The authors would like to express their gratitude to Professor Jinqiao Duan (Department of Mathematics, Great Bay University, Dongguan, Guangdong, China) for his support and helpful discussions.


\begin{thebibliography}{1}
\bibitem{Baleanu_1} 
D. Bǎleanu and A.M. Lopes, {\em Handbook of Fractional Calculus with Applications: Applications in Engineering, Life and Social Sciences, Part A.} Berlin, Boston: De Gruyter, 2019. 
%
 \bibitem{Baleanu_2} 
 D. Bǎleanu and A.M. Lopes, {\em Handbook of Fractional Calculus with Applications: Applications in Engineering, Life and Social Sciences, Part B.} Berlin, Boston: De Gruyter, 2019.
 %
	\bibitem{benvenuti-farina} L. Benvenuti and L. Farina (2023). Positive dynamical systems: New applications, old problems. \textit{International Journal of Control, Automation, and Systems}, {\bf 21} (X), pp. 1--8.
 %
\bibitem{Bokharaie}
V.S. Bokharaie, O. Mason, and M. Verwoerd, D-stability and delay independent stability of homogeneous cooperative systems. {\em IEEE Transactions on Automatic Control}, {\bf 55} (12), pp. 2882--2885.
%
\bibitem{Tuan_Cong_JMAA}
	N.D. Cong, H.T. Tuan, and H.Trinh, On asymptotic properties of solutions to fractional differential equations. {\em Journal of Mathematical Analysis and Applications,} {\bf 484} (2020), 123759.
 %
 \bibitem{Kai} K.~Diethelm,
\newblock{\em The Analysis of Fractional Differential Equations: An Application-Oriented Exposition Using Differential Operators of Caputo Type.} \newblock{ Lecture Notes in Mathematics,} {\bf 2004}.
\newblock{Springer-Verlag, Berlin, 2010.}
%
\bibitem{Dong}
	J.G. Dong, On the Decay Rates of Homogeneous Positive Systems of Any Degree With Time-Varying Delays. {\em IEEE Transactions on Automatic Control}, {\bf 60} (2015), no. 11, pp. 2983--2988.
 %
 \bibitem{Farina}
 L. Farina and S. Rinaldi, {\em Positive Linear Systems.} Wiley Interscience Series, 2000.
 %
	\bibitem{Feyzmahdavian1}
	H.R. Feyzmahdavian, T. Charalambous, and M. Johansson, Asymptotic stability and decay rates of homogeneous positive systems with bounded and unbounded delays. {\em SIAM Journal on Control and Optimization}, {\bf 52} (2014), no. 4, pp. 2623--2650.
 %
 \bibitem{Feyzma_2}
 H. R. Feyzmahdavian, T. Charalambous, and M. Johansson, Exponential Stability of Homogeneous Positive Systems of Degree One With Time-Varying Delays. {\em IEEE Transactions on Automatic Control}, {\bf 59} (2014), no. 6, pp. 1594--1599.
 %
 
 





%
%
\bibitem{Gorenflo_Book}
R. Gorenflo, A.A. Kilbas, F. Mainardi, and S.V. Rogosin, {\em Mittag-Leffler Functions, Related Topics and Applications.} {Springer Monogr. Math}.
 Springer, Heidelberg, 2014.	
\bibitem{Haddad}
W.M. Haddad, and V. Chellaboina, Stability theory for nonnegative and compartmental dynamical systems with time delay. {\em Systems and Control Letters}, {\bf 51} (2004), pp. 355--361.
\bibitem{Haddad_book} W.M. Haddad, V. Chellaboina, and Q. Hui, {\em Nonnegative and Compartmental Dynamical Systems.} Princeton University Press, Princeton, New Jersey, 2010.
%
\bibitem{Hien21}
L.V. Hien, {\em Positivity and Stability of Nonlinear Time-Delay Systems in Neural Networks}. In: Park, J. (eds) Recent Advances in Control Problems of Dynamical Systems and Networks. Studies in Systems, Decision and Control, {\bf 301}. Springer, Cham, 2021.
%
%
\bibitem{NNThang_23}
N.T.T. Huong, N.N. Thang, and T.M. Nguyet, Global fractional Halanay inequalities approach to finite-time stability of nonlinear fractional order delay systems. {\em Journal of Mathematical Analysis and Applications}, {\bf 525} (2023), no. 1, 127145.
%
\bibitem{KT_23}
T.D. Ke and N.N. Thang, An Optimal Halanay Inequality and Decay Rate of Solutions to Some Classes of Nonlocal Functional Differential Equations. {\em J Dyn Diff Equat.,} 2023. https://doi.org/10.1007/s10884-023-10323-w
\bibitem{Mason}
		O. Mason and M. Verwoerd, Observations on the stability properties of cooperative systems. {\em Systems and Control Letters}, {\bf 58} (2009), pp. 461--467.
%
\bibitem{Petras} I. Petráš, {\em Handbook of Fractional Calculus with Applications: Applications in Control.} Berlin, Boston: De Gruyter, 2019.
%
%
\bibitem{Rantzer}
A. Rantzer and M. E. Valcher (2018). {\em A Tutorial on Positive Systems and Large Scale Control.} 2018 IEEE Conference on Decision and Control (CDC), 2018, pp. 3686--3697.
%
\bibitem{H.L.Smith} 
H.L. Smith, {\em Monotone Dynamical Systems: An Introduction to	the Theory of Competitive and Cooperative Systems, vol. 41 of	Mathematical Surveys and Monographs}. American Mathematical Society, Providence, RI, USA, 1995.
%
\bibitem{Sun_18}
H.G. Sun, Y. Zhang, D. Bǎleanu, W. Chen, and Y.Q. Chen, A new collection of real world
applications of fractional calculus in science and engineering. {\em Communications in Nonlinear
Science and Numerical Simulation}, {\bf 64} (2018), pp. 213--231.
%
	%
\bibitem{Tarasov_1}
V.E. Tarasov, {\em Handbook of Fractional Calculus with Applications: Applications in Physics, Part A.} Berlin, Boston: De Gruyter, 2019.
%
\bibitem{Tarasov_2}
V.E. Tarasov, {\em Handbook of Fractional Calculus with Applications: Applications in Physics, Part B.} Berlin, Boston: De Gruyter, 2019.
%
%
%

\bibitem{Thinh_Tuan_23}
L.V. Thinh and H.T. Tuan,  Asymptotic behaviour of solutions to some classes of multi-order fractional cooperative systems. {\em Submitted}, 2023.
%
\bibitem{Thinh_Tuan_CNSNS}
L.V. Thinh and H.T. Tuan,
Separation of solutions and the attractivity of fractional-order positive linear delay systems with variable coefficients.
{\em Communications in Nonlinear Science and Numerical Simulation,} {\bf 132} (2024), 107899.
%
\bibitem{Tuan_Trinh_Siam}
H.T. Tuan and H. Trinh, A qualitative theory of time delay nonlinear fractional-order systems. {\em SIAM Journal on Control and Optimization,}  {\bf 58} (2020), no. 3, pp. 1491--1518.
%
\bibitem{THL_21}
H.T. Tuan, H. Trinh, and J. Lam, Necessary and sufficient conditions of the positivity and stability to mixed fractional-order systems. {\em International Journal of Robust and Nonlinear Control}, {\bf 31} (2021), no. 1, pp. 37--50.
%
\bibitem{Tuan_Thinh_Esaim}
H.T. Tuan and L.V. Thinh, Qualitative analysis of solutions to mixed-order positive linear coupled systems with  bounded or unbounded delays. {\em ESAIM - Control, Optimisation and Calculus of Variations,} {\bf 29} (2023), pp. 1--35.
%
\bibitem{Vainikko_16}
	G. Vainikko, Which functions are fractionally differentiable? 
	{\em Z. Anal. Anwend.}, {\bf 35} (2016), no. 4, pp. 465--487.
%
\bibitem{Xiao}
Q. Xiao,  Z. Huang, Z. Zeng, T. Huang, and F. Lewis, Stability of homogeneous positive systems with time-varying delays.
{\em Automatica}, {\bf 152}, June 2023, 110965.
%
\bibitem{Shuailei_Zhang} 
S. Zhang, M. Tang, X. Liu, and X. Zhang, Mittag–Leffler stability and stabilization of delayed fractional-order
memristive neural networks based on a new Razumikhin-type
theorem. {\em Journal of the Franklin Institute,} {\bf 361} (2024), pp. 1211--1226.
\end{thebibliography}
\end{document}